\documentclass[11pt,a4paper]{amsart}
\usepackage[utf8]{inputenc}
\usepackage[english]{babel}
\usepackage{pst-grad} 
\usepackage{pst-plot} 
\usepackage{pstricks}
\usepackage{amsmath}
\usepackage{amssymb}
\usepackage{mathrsfs}
\usepackage{amsthm}
\usepackage[dvips]{graphicx}
\usepackage{epsfig}

\newcommand{\RR}{\mathbb R}

\newcommand{\ZZ}{\mathbb Z}

\newcommand{\TT}{\mathbb T}

\newcommand{\pat}{\partial_t}
\newcommand{\pax}{\partial_x}

\newcommand{\jeps}{\mathcal{H}_\epsilon*}

\newcommand{\vertiii}[1]{{\left\vert\kern-0.25ex\left\vert\kern-0.25ex\left\vert #1 
    \right\vert\kern-0.25ex\right\vert\kern-0.25ex\right\vert}}


\usepackage[pdfauthor={...},pdftitle={...},bookmarks,colorlinks]{hyperref}

\newcounter{comentcount}
\setcounter{comentcount}{0}
\newcounter{teocount}
\setcounter{teocount}{0}
\newtheorem{lem}{Lemma}
\newtheorem{prop}{Proposition}

\newtheorem{teo}[teocount]{Theorem}  
\newtheorem{defi}{Definition}

\newtheorem{remark}{Remark}

\title[]{On the fractional Fisher information with applications to a hyperbolic-parabolic system of chemotaxis}


\author[R. Granero-Belinch\'{o}n]{Rafael Granero-Belinch\'{o}n}
\email{granero@math.univ-lyon1.fr}
\address{Univ Lyon, Université Claude Bernard Lyon 1, CNRS UMR 5208, Institut Camille Jordan, 43 blvd. du 11 novembre 1918, F-69622 Villeurbanne cedex, France.}


\begin{document}

\begin{abstract}
We introduce new lower bounds for the fractional Fisher information. Equipped with these bounds we study a hyperbolic-parabolic model of chemotaxis and prove the global existence of solutions in certain dissipation regimes.
\end{abstract}

\maketitle


\section{Introduction}
In this note we study the following system of partial differential equations
\begin{equation}\label{eq:6b}
\left\{\begin{aligned}
\pat u&=-\mu\Lambda^\alpha u+\pax (uq),\\
\pat q&=\pax f(u),
\end{aligned}\right.\text{ for }x\in\TT,\,t\geq0,
\end{equation}
where $\TT$ denotes the $1-$dimensional torus, $f$ is a smooth function, $\Lambda^\alpha=(-\Delta)^{\alpha/2}$ denotes the fractional Laplacian with $0<\alpha\leq 2$ (see Appendix \ref{sec9b} for the expression as a singular integral and some properties) and $\mu\geq0$ is a fixed constant. 

This system was proposed by Othmers \& Stevens \cite{stevens1997aggregation} (see also Levine, Sleeman, Brian \& Nilsen-Hamilton \cite{levine2000mathematical}) based on biological considerations as a model of the formation of new blood vessels from pre-existing blood vessels (in a process that is called tumor angiogenesis). In particular, in the previous system, $u$ is the density of vascular endothelial cells and $q=\pax\log(v)$ where $v$ is the concentration of the signal protein known as vascular endothelial growth factor (VEGF). As $f$ comes from the chemical kinetics of the system, it is commonly referred as the \emph{kinetic function}. The interested reader can refer to Bellomo, Li, \& Maini \cite{bellomo2008foundations} for a detailed exposition on tumor modelling. In the case where $f(u)=u^2/2$, equation \eqref{eq:6b} also appears as a viscous regularization of the dispersionless Majda-Biello model of the interaction of barotropic and equatorial baroclinic Rossby waves \cite{majda2003nonlinear}. Another related model is the magnetohydrodynamic-Burgers system proposed by Fleischer \& Diamond \cite{fleischer2000burgers} (see also Jin, Wang \& Xiong \cite{jin2015cauchy} and the references therein).

We address the existence of solutions and their qualitative properties in the case $0<\alpha<2$. In particular, among other results, we prove the global existence of weak solutions for $f(u)=u^r/r$, $1\leq r\leq 2$ and $\alpha>2-r$. This topic is mathematically challenging due to the hyperbolic character of the equation for $q$. Indeed, at least formally, the velocity $q$ is one derivative less regular than $u$. So, the term $\pax (u q)$ is two derivatives less regular than $u$. This suggests that the diffusion given by the Laplacian ($\alpha=2$) is somehow critical. 

The main tool to achieve the results is a set of new inequalities for the generalized Fisher information (see \cite{toscani2015fractional} for a similar functional)
\begin{equation}\label{fisher}
\mathcal{I}_\alpha=\int_\TT (-\Delta)^{\alpha/2}u\Gamma(u)dx,
\end{equation}
where $\Gamma$ is a smooth increasing function. This functional is a generalization of the classical Fisher information (also known as Linnik functional)
\begin{equation}\label{fisher2}
\mathcal{I}_2=\int_\TT -\Delta u\log(u)dx,
\end{equation}
introduced in Fisher \cite{fisher1925theory} (see also Linnik \cite{linnik1959information}, McKean \cite{mckean1966speed}, Toscani \cite{toscani1992lyapunov, toscani1992new}, Villani \cite{villani1998fisher}). The Fisher information appears commonly as the rate at which the Shannon's entropy\footnote{To be completely precise, the original Shannon's entropy is $-\mathcal{S}$ and not $\mathcal{S}$ itself.} \cite{shannon1948mathematical} (or, equivalently, the Boltzmann's H function)
\begin{equation}\label{shannon}
\mathcal{S}=\int_\TT u\log(u)dx,
\end{equation}
is dissipated by diffusive semigroups as, for instance, the semigroup generated by the linear heat equation.

\subsection*{Motivation} Our motivation is two-fold. On the one hand, our motivation comes from the mathematical modelling of cancer angiogenesis. On the other hand, we find the new functional inequalities involving the fractional Fisher information that are interesting by themselves.

\subsubsection*{Mathematical biology}
The \emph{classical} Keller-Segel model of chemotaxis reads
\begin{equation}\label{eq:1}
\left\{\begin{aligned}
\pat u&=\Delta u-\chi\nabla\cdot (u\nabla\Phi(v))+F(u),\\
\tau\pat v&=\nu\Delta v+G(u,v),
\end{aligned}\right.
\end{equation}
for given functions $\Phi, F,G$. Here $u$ is the cell density and $v$ denotes again the chemical concentration. The sign of $\chi$ in \eqref{eq:1} plays an important role: $\text{sgn}(\chi)$ indicates whether we have attraction effects ($\text{sgn}(\chi)=1$) or repulsive effects ($\text{sgn}(\chi)=-1$). This model was originally proposed by Keller \& Segel \cite{keller1970initiation} (see also Patlak \cite{patlak1953random}) as a model of aggregation of the slime mold \emph{Dictyostelium discoideum}. There is a huge literature on the mathematical study of the numerous versions of \eqref{eq:1}. The interested reader can refer to the works by Blanchet, Carlen \& Carrillo \cite{blanchet2010functional},  Blanchet, Carrillo \& Masmoudi \cite{BCM}, Calvez \& Carrillo \cite{Carrillo2}, Dolbeault \& Perthame \cite{Dolbeault2} and the references therein. The applications of the system \eqref{eq:1} are wide. For instance, the model 
\begin{equation}\label{eq:2}
\left\{\begin{aligned}
\pat u&=\Delta u-\chi\nabla\cdot (u\nabla v)+ru(1-u),\\
\tau\pat v&=\nu\Delta v-v+u,
\end{aligned}\right.
\end{equation}
is related with the three-component urokinase plasminogen invasion mode (see the works Hillen, Painter \& Winkler \cite{Hillen1}). Let us remark that equation \eqref{eq:2} is equivalent to \eqref{eq:1} if $\Phi(v)=v$, $F(u)=ru(1-u)$ and $G(u,v)=u-v$. For a mathematical analysis of the previous model, the interested reader can check the works by, Tello \& Winkler \cite{TelloWinkler}, Burczak \& Granero-Belinch\'on \cite{BG, BG2, BG4} and the references therein.

Another example arises when we take into consideration the fact that cancer cells can also move according to the gradients of the stiff tissue (haptotaxis), we arrive to the coupled chemotaxis-haptotaxis model
\begin{equation}\label{eq:3}
\left\{\begin{aligned}\pat u& = \Delta u-\chi\nabla\cdot (u\nabla\Phi(v))-\xi\nabla\cdot (u\nabla\Psi(w))+F(u),\\
\tau\pat v&=\nu\Delta v+G(u,v),\\
\pat w&=H(u,v,w).
\end{aligned}\right..
\end{equation}
Here $u,v,w$ represents cell density, enzyme concentration and tissue density, respectively. 

For a more complete discussion on these models, the interested reader can check the extensive surveys by Hillen \& Painter \cite{Hillen3} and Bellomo, Bellouquid, Tao \& Winkler \cite{bellomo2015towards} and Blanchet \cite{blanchet2011parabolic}. 

Also, it has been suggested by experiments and observation that the feeding strategies of some species should be modelled using L\'evy processes. For instance one can refer to the works by Raichlen, Wood, Gordon, Mabulla, Marlowe \& Pontzer \cite{hadza} (see also the introduction in \cite[Section 2]{BG4}). Thus, a model with a fractional laplacian instead of a laplacian arises from applications:

\begin{equation}\label{eq:4}
\left\{\begin{aligned}
\pat u&=-(-\Delta)^{\alpha/2} u-\chi\nabla\cdot (u\nabla\Phi(v))+F(u),\\
\tau\pat v&=-\nu(-\Delta)^{\beta/2} v+G(u,v),
\end{aligned}\right..
\end{equation}

Once \eqref{eq:4} is considered, there are two limiting cases that are of particular importance. The first one is when the diffusion of the chemical, $v$, is much faster than the movement of the cells. Mathematically, this implies the limit $\tau\rightarrow0$ and corresponds to the, so called, parabolic-elliptic Keller-Segel system
\begin{equation}\label{eq:5}
\left\{\begin{aligned}
\pat u&=-(-\Delta)^{\alpha/2} u-\chi\nabla\cdot (u\nabla\Phi(v))+F(u),\\
0&=-(-\Delta)^{\beta/2} v+G(u,v),
\end{aligned}\right..
\end{equation}

Equation \eqref{eq:5} also appears related with the formation of large-scale structure in the primordial universe (see Ascasibar, Granero-Belinch\'on \& Moreno \cite{AGM}) or semiconductor devices (Granero-Belinch\'on \cite{Gsemiconductor}). 

The interested reader in the mathematical study on models akin to \eqref{eq:4} and \eqref{eq:5} can read the papers by Escudero \cite{escudero2006fractional}, Li, Rodrigo \& Zhang \cite{li2010exploding}, Bournaveas \& Calvez \cite{bournaveas2010one}, Burczak \& Granero \cite{BG3, BG}, Granero-Belinch\'on \& Orive \cite{GO}, Biler \& Wu \cite{BilerWu} and Wu \& Zheng \cite{WuZheng}.

The second limit case arises when the diffusion of the chemical is negligible. In that case we have $\nu\rightarrow0$ and we recover the following hybrid PDE-ODE system
\begin{equation}\label{eq:6}
\left\{\begin{aligned}
\pat u&=-\mu(-\Delta)^{\alpha/2} u-\chi\nabla\cdot (u\nabla\Phi(v))+F(u),\\
\tau\pat v&=G(u,v),
\end{aligned}\right..
\end{equation}

Notice that for the particular choice of $\tau=1$, $\chi=-1$, $F=0$, $\Phi(v)=\log(v)$, $G(u,v)=(f(u)+\lambda)v$ for certain smooth, non-decreasing function $f:\RR^+\rightarrow\RR$ and $\lambda\in\RR$, we have that \eqref{eq:6} is equivalent to \eqref{eq:6b} (recall that the new variable is $q=\nabla\log(v)$).  

\subsubsection*{Functional inequalities} For many parabolic equations, the Shannon's entropy \eqref{shannon} (or close variants) is dissipated with a rate proportional to the Fisher information \eqref{fisher}. For instance, for the fractional heat equation one has the following equality
$$
\frac{d}{dt}\mathcal{S}(t)=-\mathcal{I}_\alpha.
$$
The Shannon's entropy plays an important role in the Patlak-Keller-Segel equation \eqref{eq:5} (see Dolbeault \& Perthame \cite{Dolbeault2} and Blanchet, Dolbeault \& Perthame \cite{Dolbeault3}). Actually, for \eqref{eq:5} with $\beta=2$, $G=u-\langle u \rangle$, $\Phi=v$ and $F=0$, we have the equality
$$
\frac{d}{dt}\mathcal{S}(t)=-\mathcal{I}_\alpha+\int (u-\langle u \rangle)u.
$$
Examples of other equations with similar entropy-entropy production equalities are 

\begin{itemize}
\item nonlinear reaction-diffusion systems modeling reversible chemical reactions (see for instance Mielke, Haskovec \& Markowich \cite{uniformdecayentropy}) as
\begin{align*}
\pat u&= \Delta u + v^2-u\\
\pat v&= \Delta v - (v^2-u).
\end{align*}
\item a one dimensional model of the two-dimensional Surface Quasi-Geostrophic equation involving the Hilbert transform $H$
$$
\pat u=-\pax(u Hu)
$$
(see Castro \& C\'ordoba \cite{CC}, Carrillo, Ferreira \& Precioso \cite{Carrillo} Cafarelli \& V\'azquez \cite{CV}, Bae \& Granero-Belinch\'on \cite{bae2015global}).
\item a model for the slope of the interface in two-phase flow in porous media
$$
\pat u=-\pax\left (\frac{Hu}{1+u^2}\right),
$$
(see Granero-Belinch\'on, Navarro, \& Ortega \cite{GNO}). 
\end{itemize}
Consequently, lower bounds for the Fisher information allow to obtain parabolic gain of regularity of the type $L^p_t W^{s,q}_x$ and, together with Poincar\'e or Sobolev inequalities can be used to obtain explicit rates of convergence to equilibrium.

\subsection*{Prior results on the parabolic-hyperbolic system}
To the best of our knowledge, the only results on \eqref{eq:6b} that are available in the literature study the case where the diffusion is local ($\alpha=2$). 

Among these, the one dimensional case has received lots of attention in the recent years. In particular, Fan \& Zhao \cite{fan2012blow}, Li \& Zhao \cite{li2015initial}, Mei, Peng \& Wang \cite{mei2015asymptotic}, Li, Pan \& Zhao \cite{li2015quantitative}, Jun, Jixiong, Huijiang \& Changjiang \cite{jun2009global} Li \& Wang \cite{li2009nonlinear} and Zhang \& Zhu \cite{zhang2007global} studied the system \eqref{eq:6b} when $\alpha=2$ and $f(u)=u$ under different boundary conditions. Notably, they proved the global existence of classical solution and the asymptotic behavior for large times. Jin, Li \& Wang \cite{jin2013asymptotic} and Li, Li \& Wang \cite{li2014stability} studied the existence and stability of traveling waves. Wang \& Hillen studied the existence of shock solutions \cite{wang2008shock}.

The one dimensional case with $\alpha=2$ and a general $f(u)$ was studied by Zhang, Tan \& Sun \cite{zhang2013global} and Li \& Wang \cite{li2010nonlinear}. 

The proofs in the one dimensional case take advantage of the dissipative character of the system, namely, that
$$
-\int_0^t\int_\Omega \log(u)\pax^2u dxds<\infty.
$$
This dissipation is enough to guarantee global bounds $u\in L^p_tL^q_x$ that, conversely, implies a global bound $\pat q\in L^2_tL^2_x$. In the fractional case, the dissipation is weaker and the analogous bound for $\pat q$ is
$$
\pat q\in L^2_t H^{\alpha/2-1}_x.
$$

In high dimensions the results are more sparse. In that regard, Wang, Xiang \& Yu \cite{wang2016asymptotic} study the global well-posedness for small initial data of a viscosity regularization of a high-dimensional version of \eqref{eq:6b}, Li, Li \& Zhao \cite{li2011hyperbolic} studied the local existence, blow-up criteria and global existence of small data for the system \eqref{eq:6b} in 2 and 3 dimensions. Furthermore, they also proved the decay of certain Sobolev norms for small data. A global existence result regarding the multidimensional \emph{alter ego} of \eqref{eq:6b} in Besov spaces can be found in Hau \cite{hao2012global}. The global existence of solution for small initial data was address also by Li, Pan \& Zhao \cite{li2012global}.

In the forthcoming paper \cite{Ghyperparstrong}, we address the well-posedness of \eqref{eq:6b} in two spatial dimension when $f(u)=u^2/2$.

\subsection*{Plan of the paper} 
The plan of the paper is as follows. In section \ref{sec2} we state our results. In section \ref{sec3} we present some of the notation and the functional spaces. In section \ref{sec4} we prove our results for the fractional Fisher information. In section \ref{sec5} we prove the local existence of smooth solutions for \eqref{eq:6b}, while in section \ref{sec6} we prove the dissipative character of the system. In section \ref{sec7} we prove the global existence of solution when $\alpha=2$. In section \ref{sec8} we establish the global existence of weak solution for \eqref{eq:6b}. In Appendix \ref{sec9b} we obtain the explicit expression of the fractional Laplacian as a singular integral and some properties. Finally, in Appendix \ref{sec9} we write some auxiliary inequalities regarding fractional Sobolev spaces.

\section{Results and discussion}\label{sec2}
\subsection*{Results regarding the fractional Fisher information}
In this section we establish some lower bounds for the fractional Fisher information inequalities. These inequalities are generalizations of those in Bae \& Granero-Belinch\'on \cite{bae2015global} and Burczak, Granero-Belinch\'on \& Luli \cite{BGK}, Li \& Zhao \cite[equations 1.7, 2.20]{li2015initial} and Li, Pan \& Zhao \cite[equation 1.12]{li2015quantitative}. For the sake of generality, we consider the case where the spatial domain is either $\Omega^d=\RR^d$ or $\Omega^d=\TT^d$.

In what follows we assume that 
$$
\Gamma(z):\RR^+\rightarrow\RR
$$ 
is a fixed, $C^1$, increasing function such that
\begin{equation}\label{auxphi}
\Gamma'(z)\geq \frac{c}{z}\geq0,
\end{equation}
where $c$ is a fixed constant. For instance, an example of such a function $\Gamma$ would be $\Gamma(z)=\log(z)$ for $z>0$.
\begin{lem}\label{lemaentropy}
Let $d\geq1$, $0\leq u$ be a smooth, given function and $0<\alpha<2$, $0<\delta<\alpha/2$ be two fixed constants. Then, 
\begin{equation}\label{aux1}
\|u\|_{\dot{H}^{\alpha/2}(\Omega^d)}^2\leq C(\alpha,d,\Gamma)\|u\|_{L^\infty(\Omega^d)}\int_{\Omega^d}\Lambda^\alpha u(x)\Gamma(u(x))dx,\;\Omega^d=\RR^d,\TT^d,
\end{equation}
\begin{equation}\label{aux2}\|u\|_{\dot{W}^{\alpha/2-\delta,1}(\TT^d)}^2\leq C(\alpha,d,\delta, \Gamma)\|u\|_{L^1(\TT^d)}\int_{\TT^d}\Lambda^\alpha u(x)\Gamma(u(x))dx.
\end{equation}
\end{lem}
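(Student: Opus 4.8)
The plan is to start from the singular-integral representation of $\Lambda^\alpha$ recorded in Appendix \ref{sec9b}, namely $\Lambda^\alpha u(x)=c_{d,\alpha}\,\mathrm{P.V.}\!\int \frac{u(x)-u(y)}{|x-y|^{d+\alpha}}\,dy$ (with the appropriate periodic kernel when $\Omega^d=\TT^d$), and to symmetrize the resulting bilinear form. Pairing against $\Gamma(u)$ and exchanging the roles of $x$ and $y$ yields the identity
$$
\int_{\Omega^d}\Lambda^\alpha u\,\Gamma(u)\,dx=\frac{c_{d,\alpha}}{2}\int_{\Omega^d}\int_{\Omega^d}\frac{\big(u(x)-u(y)\big)\big(\Gamma(u(x))-\Gamma(u(y))\big)}{|x-y|^{d+\alpha}}\,dy\,dx .
$$
Since $\Gamma$ is increasing the integrand is nonnegative, which both legitimizes the symmetrization and shows that $\mathcal{I}_\alpha\ge0$. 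Everything then reduces to a single pointwise inequality.

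The pointwise inequality is where the hypothesis $\Gamma'(z)\ge c/z$ enters. For $a>b>0$ the mean value theorem gives $\Gamma(a)-\Gamma(b)=\Gamma'(\xi)(a-b)$ with $b<\xi<a$, and $\Gamma'(\xi)\ge c/\xi\ge c/a=c/\max(a,b)$, whence
$$
(a-b)^2\le \frac{\max(a,b)}{c}\,(a-b)\big(\Gamma(a)-\Gamma(b)\big),
$$
which also holds trivially when $a=b$ and survives the limit $b\to0$ by monotone convergence. For \eqref{aux1} I simply bound $\max(u(x),u(y))\le\|u\|_{L^\infty}$, insert this into the Gagliardo seminorm $\|u\|_{\dot H^{\alpha/2}}^2\simeq\iint\frac{|u(x)-u(y)|^2}{|x-y|^{d+\alpha}}\,dx\,dy$, and recognize the symmetrized form above; all constants collapse into $C(\alpha,d,\Gamma)$.

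For \eqref{aux2} the idea is to keep the weaker bound $\max(a,b)\le a+b$ and to split the kernel by Cauchy--Schwarz so that one factor reproduces the Fisher information and the other produces $\|u\|_{L^1}$. Writing the seminorm with exponent $s=\alpha/2-\delta$ and factoring $|x-y|^{-(d+s)}=|x-y|^{-(d+\alpha)/2}\,|x-y|^{-(d/2-\delta)}$, the pointwise bound and Cauchy--Schwarz give
$$
\|u\|_{\dot W^{\alpha/2-\delta,1}}\le C\Big(\iint\frac{(u(x)-u(y))(\Gamma(u(x))-\Gamma(u(y)))}{|x-y|^{d+\alpha}}\,dx\,dy\Big)^{1/2}\Big(\iint\frac{u(x)+u(y)}{|x-y|^{d-2\delta}}\,dx\,dy\Big)^{1/2}.
$$
The first factor is controlled by $\mathcal{I}_\alpha^{1/2}$; in the second, symmetry and $u\ge0$ reduce it to $2\int u(x)\big(\int_{\TT^d}|x-y|^{-(d-2\delta)}\,dy\big)\,dx$, and the inner integral is a finite constant $C(\delta,d)$ because $\delta>0$ makes $d-2\delta<d$ integrable near the diagonal of the compact torus, leaving $\|u\|_{L^1}$. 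Squaring yields \eqref{aux2}.

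The main obstacle is the exponent bookkeeping and the correct Cauchy--Schwarz splitting in \eqref{aux2}, together with verifying integrability of the residual kernel $|x-y|^{-(d-2\delta)}$: this is exactly where both constraints are consumed, $0<\delta$ for integrability and $\delta<\alpha/2$ so that the seminorm index $\alpha/2-\delta$ stays in $(0,1)$, and it is also why \eqref{aux2} is stated only on the torus, where the far-field of the kernel is automatically harmless. A minor point to address carefully is the principal-value interchange of integrals underlying the symmetrization, which is justified by the nonnegativity of the integrand via Tonelli.
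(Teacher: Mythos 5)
Your proposal is correct and follows essentially the same route as the paper: symmetrize $\int\Lambda^\alpha u\,\Gamma(u)\,dx$ into the nonnegative bilinear form, use $\Gamma'(z)\geq c/z$ to pass between $\Gamma(u(x))-\Gamma(u(y))$ and $u(x)-u(y)$, bound the weight by $\|u\|_{L^\infty}$ for \eqref{aux1}, and for \eqref{aux2} split the kernel as $|x-y|^{-(d+\alpha)/2}\,|x-y|^{-(d/2-\delta)}$, apply Cauchy--Schwarz, and control the residual Riesz-potential factor by $C(\delta,d)\|u\|_{L^1}$. Your only deviation is cosmetic: you use the mean value theorem with $\max(u(x),u(y))\leq u(x)+u(y)$ where the paper writes $\Gamma(u(x))-\Gamma(u(y))=\int_0^1\Gamma'(su(x)+(1-s)u(y))(u(x)-u(y))\,ds$ and inserts $\Gamma'{}^{\pm1/2}$ inside the $s$-integral, which yields the same bounds.
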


\begin{remark}
In the local case $\alpha=2$, \eqref{aux2} holds with $\delta=0$.
\end{remark}

In the one-dimensional case we can obtain a sharper result:

\begin{lem}\label{lemaentropy2}
Let $d=1$and $1<\alpha\leq2$ be a fixed constant. Given $0\leq u$ a smooth function, we have that 
\begin{equation}\label{eq:10c}
\|u\|_{\dot{H}^{\alpha/2}(\RR)}^{2-\frac{2}{1+\alpha}}\leq C(\alpha,\Gamma)\|u\|_{L^1(\RR)}^{1-\frac{2}{1+\alpha}}\int_{\RR}\Lambda^\alpha u(x)\Gamma(u(x))dx,
\end{equation}
\begin{equation}\label{eq:10d}
\|u\|_{\dot{H}^{\alpha/2}(\TT)}^{2-\frac{2}{1+\alpha}}\leq C(\alpha,\Gamma)\|u\|_{L^1(\TT)}^{1-\frac{2}{1+\alpha}}\left(\int_{\TT}\Lambda^\alpha u(x)\Gamma(u(x))dx+\|u\|_{L^1(\TT)}\right).
\end{equation}
\end{lem}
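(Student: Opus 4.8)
The plan is to feed the bound \eqref{aux1} of Lemma~\ref{lemaentropy}, namely $\|u\|_{\dot H^{\alpha/2}}^2\le C\|u\|_{L^\infty}\int \Lambda^\alpha u\,\Gamma(u)\,dx$, into a one-dimensional interpolation inequality that trades the $L^\infty$ factor for an $L^1$ factor. The decisive point is that the hypothesis $\alpha>1$ places us strictly above the critical Sobolev exponent $1/2$ in one dimension, so that $\dot H^{\alpha/2}(\RR)\hookrightarrow L^\infty(\RR)$ and a Gagliardo--Nirenberg inequality with $L^1$ on the low end becomes available (cf. Appendix~\ref{sec9}). Throughout I write $\theta=\tfrac{\alpha-1}{\alpha+1}$, so that $1-\theta=\tfrac{2}{\alpha+1}$ and, as one checks directly, $2-\tfrac{2}{1+\alpha}=1+\theta$ while $1-\tfrac{2}{1+\alpha}=\theta$; these are exactly the exponents appearing in \eqref{eq:10c}--\eqref{eq:10d}.

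First I would treat $\Omega=\RR$. The relevant Gagliardo--Nirenberg inequality, whose exponents are forced by the scaling $u\mapsto u(\lambda\,\cdot)$, reads
\[
\|u\|_{L^\infty(\RR)}\le C(\alpha)\,\|u\|_{L^1(\RR)}^{\theta}\,\|u\|_{\dot H^{\alpha/2}(\RR)}^{1-\theta}.
\]
Inserting this into \eqref{aux1} gives $\|u\|_{\dot H^{\alpha/2}}^2\le C\|u\|_{L^1}^{\theta}\|u\|_{\dot H^{\alpha/2}}^{1-\theta}\int \Lambda^\alpha u\,\Gamma(u)\,dx$, and (the case $\|u\|_{\dot H^{\alpha/2}}=0$ being trivial) dividing by $\|u\|_{\dot H^{\alpha/2}}^{1-\theta}$ yields
\[
\|u\|_{\dot H^{\alpha/2}}^{1+\theta}\le C(\alpha,\Gamma)\,\|u\|_{L^1}^{\theta}\int \Lambda^\alpha u\,\Gamma(u)\,dx,
\]
which is precisely \eqref{eq:10c}.

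On the torus the same scheme works but must account for the zero Fourier mode, which is the source of the extra $\|u\|_{L^1}$ term in \eqref{eq:10d}. I would split $u=\langle u\rangle+w$ into its mean $\langle u\rangle=|\TT|^{-1}\int_\TT u\ge 0$ (recall $u\ge 0$) and its mean-zero part $w$, noting that $\|u\|_{\dot H^{\alpha/2}(\TT)}=\|w\|_{\dot H^{\alpha/2}(\TT)}$ and $\Lambda^\alpha u=\Lambda^\alpha w$. Applying the mean-zero Gagliardo--Nirenberg inequality to $w$ (valid on $\TT$ because the nonzero frequencies are bounded away from the origin) and using $\|w\|_{L^1}\le 2\|u\|_{L^1}$ together with $\langle u\rangle\le C\|u\|_{L^1}$, I obtain
\[
\|u\|_{L^\infty(\TT)}\le C\,\|u\|_{L^1}^{\theta}\,\|u\|_{\dot H^{\alpha/2}}^{1-\theta}+C\,\|u\|_{L^1}.
\]
Substituting into \eqref{aux1} produces $\|u\|_{\dot H^{\alpha/2}}^2\le C\|u\|_{L^1}^{\theta}\|u\|_{\dot H^{\alpha/2}}^{1-\theta}\,\mathcal J+C\|u\|_{L^1}\,\mathcal J$, where $\mathcal J=\int_\TT\Lambda^\alpha u\,\Gamma(u)\,dx$. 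A Young inequality absorbs the first summand into $\tfrac12\|u\|_{\dot H^{\alpha/2}}^2$ at the cost of $C\big(\|u\|_{L^1}^{\theta}\mathcal J\big)^{2/(1+\theta)}$, and a second weighted Young inequality (applied after raising to the power $(1+\theta)/2$, using the algebraic identity $\|u\|_{L^1}^{(1+\theta)/2}\mathcal J^{(1+\theta)/2}=(\|u\|_{L^1}^{\theta}\mathcal J)^{(1+\theta)/2}(\|u\|_{L^1}^{1+\theta})^{(1-\theta)/2}$) rewrites everything as $\|u\|_{\dot H^{\alpha/2}}^{1+\theta}\le C\,\|u\|_{L^1}^{\theta}\big(\mathcal J+\|u\|_{L^1}\big)$, which is \eqref{eq:10d}.

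The main obstacle is not any single deep estimate but pinning down the interpolation exponents and the endpoint behaviour. Two points deserve care. First, Lemma~\ref{lemaentropy} is stated for $\alpha<2$, whereas here $\alpha$ may equal $2$; at $\alpha=2$ the bound \eqref{aux1} still holds, now via $\int_\TT\Lambda^2 u\,\Gamma(u)\,dx=\int_\TT\Gamma'(u)|\nabla u|^2\,dx\ge c\int_\TT u^{-1}|\nabla u|^2\,dx$ combined with $\int_\TT|\nabla u|^2\le\|u\|_{L^\infty}\int_\TT u^{-1}|\nabla u|^2$, so the argument runs uniformly for $1<\alpha\le 2$. Second, the whole strategy degenerates as $\alpha\downarrow 1$, since then $1-\theta\to 1$ and the $L^1$ factor disappears; this is consistent with the failure of the embedding $\dot H^{1/2}(\RR)\hookrightarrow L^\infty(\RR)$ and explains the strict inequality $\alpha>1$ in the hypothesis. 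Once the mean/mean-zero splitting on $\TT$ is in place, the Young inequality bookkeeping that generates the additive $\|u\|_{L^1}$ on the right-hand side is entirely routine.
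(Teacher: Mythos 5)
Your proof is correct and follows essentially the same route as the paper: both combine the bound \eqref{aux1} of Lemma \ref{lemaentropy} with the interpolation inequalities \eqref{eq:10} and \eqref{eq:10b} from Appendix \ref{sec9} (exactly your Gagliardo--Nirenberg inequalities with $\theta=\frac{\alpha-1}{\alpha+1}$), dividing out the surplus power of $\|u\|_{\dot{H}^{\alpha/2}}$ on $\RR$ and handling the zero Fourier mode on $\TT$. The only cosmetic difference is the final bookkeeping on the torus, where you absorb terms via two Young inequalities while the paper substitutes $Q=\|u\|_{\dot{H}^{\alpha/2}(\TT)}+\|u\|_{L^1(\TT)}$ and divides by $Q^{\frac{2}{1+\alpha}}$; both yield \eqref{eq:10d}, and your explicit treatment of the endpoint $\alpha=2$ matches the paper's remark that the lemma extends to the local case.
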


\subsection*{Results regarding equation (\ref{eq:6b})}
We start this section with the definition of \emph{admissible} kinetic function $f$:
\begin{defi}\label{admissible} A kinetic function 
$$
f(y):(-1,\infty)\rightarrow \RR^+
$$
is admissible if
\begin{itemize}
\item $f(y)\in W^{4,\infty}(-1,\infty)$,
\item $f'(y)>0$ if $y>0$,
\item for $y\in[a,b]\subset [0,\infty)$, there exist $\gamma_{a}^{b},\tilde{\gamma}_{a}^{b}<\infty$ such that
\begin{equation}\label{gamma}
0\leq \gamma_{a}^{b}\leq \frac{y}{f'(y)}\leq \tilde{\gamma}_a^b,\;\forall\, y\in[a,b].
\end{equation}
\end{itemize} 
\end{defi}

Sometimes we require the admissible kinetic function $f$ (see Definition \ref{admissible}) to satisfy the uniform bound 
\begin{equation}\label{lowerf'}
C_1\leq f'(y)\;\,\forall\,y\geq0,
\end{equation}
for suitable constant $C_1$.

Our first result establishes the local existence for the one-dimensional problem \eqref{eq:6b} for $\mu\geq0$, $0\leq \alpha\leq 2$ and a general admissible kinetic function. Let us emphasize that the proof of this result does not use the regularizing effect from the viscosity. In other words, the result holds true in the inviscid case $\mu=0$.
 
\begin{teo}\label{local} Let $(u_0,q_0)\in H^3(\TT)\times H^3(\TT)$ be the initial data and $f$ be an admissible kinetic function in terms of Definition \ref{admissible}. Assume that $u_0\geq0$, $\langle q_0\rangle=0$, $0\leq\alpha\leq2$, $\mu\geq0$. Then there exist $0<T^*=T^*(u_0,q_0,\alpha,f)\leq\infty$ and a unique classical solution $(u(t),q(t))$ to problem \eqref{eq:6b} verifying
$$
u\in L^\infty(0,T^*;H^3(\TT))\cap L^2(0,T^*;\dot{H}^{3+\alpha/2}(\TT)),
$$
$$
q\in L^\infty(0,T^*;H^{2+\alpha/2}(\TT)),
$$
and the following inequality
\begin{multline*}
\sup_{0\leq t \leq T^*}\|u(t)\|_{H^3}^2+2\mu\int_0^{T^*}\|u(s)\|_{\dot{H}^{3+\alpha/2}}^2ds
\\
\leq 2\bigg{(}\|u_0\|_{H^3}^2+\tilde{\gamma}\left(\left\|\pax^3q_0\right\|_{L^2}^2+\left\|q_0\right\|_{L^2}^2\right)\bigg{)},
\end{multline*}
where $\tilde{\gamma}=\tilde{\gamma}_{0}^{2\max\{u_0\}}$ is given by \eqref{gamma}. If $u_0$ verifies the stricter condition
\begin{equation}\label{positiv}
0< u_0,
\end{equation}
then the solution verifies
$$
u\in L^\infty(0,T^*;H^3(\TT))\cap L^2(0,T^*;\dot{H}^{3+\alpha/2}(\TT)),
$$
$$
q\in L^\infty(0,T^*;H^3(\TT)),
$$
and the following inequality
\begin{multline*}
\sup_{0\leq t \leq T^*}\left(\|u(t)\|_{H^3}^2+\gamma\left\|q\right\|_{H^3}^2\right)+2\mu\int_0^{T^*}\|u(s)\|_{\dot{H}^{3+\alpha/2}}^2ds
\\
\leq 2\bigg{(}\|u_0\|_{H^3}^2+\tilde{\gamma}\left(\left\|\pax^3q_0\right\|_{L^2}^2+\left\|q_0\right\|_{L^2}^2\right)\bigg{)},
\end{multline*}
where $\gamma=\gamma_{\min\{u_0\}/2}^{2\max\{u_0\}}$ and $\tilde{\gamma}=\tilde{\gamma}_{\min\{u_0\}/2}^{2\max\{u_0\}}$ are given by \eqref{gamma}. 
\end{teo}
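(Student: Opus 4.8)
The plan is to construct the solution by a regularization--compactness argument and to obtain the uniform bounds through a carefully weighted energy estimate that exploits the algebraic structure of the coupling. First I would regularize the system, for instance by replacing the nonlinearities with Friedrichs-mollified versions $J_\epsilon$ (or, equivalently, by adding an artificial viscosity $\epsilon\pax^2 q$ to the second equation), so that for each $\epsilon>0$ the regularized problem becomes an ODE in $H^3(\TT)\times H^3(\TT)$ whose local solvability follows from the Cauchy--Lipschitz theorem. The whole point is then to derive a priori bounds on $(u^\epsilon,q^\epsilon)$ that are uniform in $\epsilon$ and, crucially, do not deteriorate as $\mu\to0$, since the result is claimed for $\mu\ge0$.

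The heart of the matter is the energy estimate. Differentiating three times and testing the $u$-equation against $\pax^3 u$ produces the dissipative term $-\mu\|\pax^3 u\|_{\dot H^{\alpha/2}}^2$ together with a coupling contribution whose worst part is $\int_\TT u\,\pax^3 u\,\pax^4 q\,dx$; after one integration by parts this becomes $-\int_\TT u\,\pax^4 u\,\pax^3 q\,dx$ modulo terms of at most third order, a term that loses one derivative and that the (possibly absent) viscosity cannot absorb. To cancel it I would test $\pax^3$ of the $q$-equation not against $\pax^3 q$ but against the weighted quantity $w(u)\pax^3 q$ with $w(u)=u/f'(u)$; the admissibility bound \eqref{gamma} guarantees $\gamma\le w(u)\le\tilde\gamma$, so that $\tfrac12\|\pax^3 u\|_{L^2}^2+\tfrac12\int_\TT w(u)(\pax^3 q)^2\,dx$ is comparable to $\|u\|_{\dot H^3}^2+\|q\|_{\dot H^3}^2$. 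With this choice the top-order term $\int_\TT w(u)f'(u)\,\pax^3 q\,\pax^4 u\,dx=\int_\TT u\,\pax^4 u\,\pax^3 q\,dx$ generated by the $q$-equation exactly cancels the offending term from the $u$-equation. Every remaining contribution --- the commutators, the term $\tfrac12\int_\TT w'(u)\,\pat u\,(\pax^3 q)^2\,dx$ coming from the time-dependence of the weight, and the analogous low-order estimates --- involves at most three derivatives and is therefore controlled via $H^3(\TT)\hookrightarrow C^2(\TT)$ by a polynomial in $\|u\|_{H^3}$ and $\|q\|_{H^3}$. This closes a Riccati-type differential inequality $\tfrac{d}{dt}\mathcal E\le C(\mathcal E)\,\mathcal E$, yielding a uniform existence time $T^*$ and the stated bound.

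Two further ingredients make the weight meaningful and pin down the regularity classes. First I would record that the $u$-equation propagates nonnegativity and that, on a short time interval, $u$ remains trapped in $[\min u_0/2,\,2\max u_0]$ (using the maximum principle for $-\mu\Lambda^\alpha$ together with the transport structure of $\pax(uq)$); this is precisely what allows one to invoke \eqref{gamma} on $[a,b]=[0,2\max u_0]$ in the merely nonnegative case and on $[\min u_0/2,\,2\max u_0]$ under the strict positivity \eqref{positiv}, producing the two constants $\tilde\gamma$ and $\gamma$. In the strictly positive case the weight is bounded below, so the energy directly controls $\|q\|_{H^3}$; in the nonnegative case $w(u)$ may degenerate where $u=0$, and the claimed regularity $q\in L^\infty_t H^{2+\alpha/2}$ is instead recovered from the identity $q(t)=q_0+\int_0^t\pax f(u)\,ds$ together with the dissipation $u\in L^2_t\dot H^{3+\alpha/2}$.

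Finally I would pass to the limit $\epsilon\to0$: the uniform energy bound, combined with the equations to control $\pat u^\epsilon$ and $\pat q^\epsilon$ in a space of negative order, provides enough compactness (Aubin--Lions--Simon) to extract a subsequence converging to a solution of \eqref{eq:6b} in the stated classes, while lower semicontinuity transfers the energy inequality to the limit. Uniqueness then follows from an energy estimate for the difference of two solutions, using the same weighted structure to treat the top-order coupling. The main obstacle throughout is the derivative loss in $\int_\TT u\,\pax^4 u\,\pax^3 q\,dx$: everything hinges on the choice of weight $w(u)=u/f'(u)$ that produces the cancellation, and on the admissibility hypothesis \eqref{gamma} keeping this weight bounded and nondegenerate.
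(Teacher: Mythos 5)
Your proposal is, in substance, the paper's own proof. The core mechanism is identical: trapping $u$ in $[\min u_0/2,\,2\max u_0]$ via the pointwise evolution of $\min_x u$ and $\max_x u$; the weighted energy containing $\int_\TT \frac{u}{f'(u)}\left((\pax^3 q)^2+q^2\right)dx$, whose weight $u/f'(u)$ produces precisely the cancellation of the derivative-loss term $\int_\TT u\,\pax^4 u\,\pax^3 q\,dx$ (the paper implements this by solving for $\pax^4 u$ from the differentiated $q$-equation and absorbing $-\tfrac12\tfrac{d}{dt}\int_\TT \frac{u}{f'(u)}(\pax^3 q)^2dx$, which is algebraically the same as your weighted test against $w(u)\pax^3 q$); a polynomial-in-time inequality for the energy giving a uniform $T^*$; a weighted $L^2$ estimate for differences giving uniqueness; and, for merely nonnegative data, recovery of $q\in L^\infty_t H^{2+\alpha/2}$ from $q(t)=q_0+\int_0^t f'(u)\pax u\,ds$.

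The one step you leave genuinely underdetermined is how the a priori estimate closes when $u_0$ is only nonnegative. Invoking \eqref{gamma} on $[0,2\max u_0]$ yields only the upper bound $\tilde\gamma$; the lower bound $\gamma$ may vanish, and then your weighted energy no longer dominates $\|q\|_{\dot H^3}^2$. This matters because the remainder terms are not all expressible through $u$ alone: the expansion of $\int_\TT \pax^3(uq)\pax^4 u\,dx$ produces contributions of size $c\|u\|_{\dot H^3}\|q\|_{\dot H^3}\|\pax u\|_{L^\infty}$, and the weight-derivative term $\tfrac12\int_\TT w'(u)\,\pat u\,(\pax^3 q)^2dx$ is quadratic in $\pax^3 q$; with a degenerate weight these cannot be absorbed into your Riccati inequality, and the Duhamel identity cannot substitute, since a pointwise-in-time bound on $\|q(t)\|_{\dot H^3}$ would require $u\in L^1_t H^4$, which is unavailable for $\alpha<2$ and vacuous when $\mu=0$. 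The paper resolves this with a two-tier structure you should make explicit: first prove the full statement for strictly positive data, where $\gamma=\gamma_{\min\{u_0\}/2}^{2\max\{u_0\}}>0$ closes the estimate including the $\gamma\|q\|_{H^3}^2$ term; then, for $u_0\geq0$, apply that result to the lifted data $u_0^\epsilon=\epsilon+u_0$, note that the resulting bound on $u^\epsilon$ is $\epsilon$-uniform because only $\tilde\gamma$ (finite on intervals $[0,b]$ by Definition \ref{admissible}) appears on the right-hand side, and pass to the limit keeping the $u$-part of the energy while recovering $q$ from the integral identity. Since your regularized data are strictly positive anyway, your scheme accommodates this repair essentially verbatim, but as written the nonnegative case does not close.
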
 

Here, the sign of $u_0$ plays the role of a stability condition in the same spirit as in Coutand \& Shkoller \cite{Coutand-Shkoller:well-posedness-free-surface-incompressible} and Cheng, Granero-Belinch\'on \& Shkoller \cite{CGS}. In that regard, it helps us to avoid derivative loss.

Before proceeding with the global in time results, we collect some global bounds showing the dissipative character of the system. We define

\begin{equation}\label{Theta}
\Theta(s)=\int_1^s\int_1^\xi \frac{f'(\chi)}{\chi}d\chi d\xi.
\end{equation}

Then,
\begin{teo}\label{dissipation} Let $(u_0,q_0)$ be the initial data satisfying the hypothesis in Theorem \ref{local} and consider $0<\alpha\leq2$ and $\mu>0$. Assume that $f$ is an admissible kinetic function in terms of Definition \ref{admissible} satisfying either  
\begin{enumerate}
\item the bound \eqref{lowerf'} or 
\item $f(y)=y^r/r$, for $1<r\leq2$.
\end{enumerate}
Then, the solution $(u(t),q(t))$ verifies 
\begin{multline}\label{Lyapunovbalance}
\|\Theta(u(t))\|_{L^1}+\frac{1}{2}\|q(t)\|_{L^2}^2+\mu\int_0^t\int_\TT(-\Delta)^{\alpha/2}u\Theta'(u)dxds\\
\leq \|\Theta(u_0)\|_{L^1}+\frac{1}{2}\|q_0\|_{L^2}^2.
\end{multline}
Furthermore, there exists $\mathcal{C}_0(\alpha,u_0,q_0)$ such that, 

\begin{itemize}
\item if $f$ verifies the bound \eqref{lowerf'}, the function $u$ gain the following regularity
$$
\int_0^t\|u(s)\|_{\dot{W}^{\alpha/2-\delta,1}}^2ds \leq C_\delta\mathcal{C}_0,\;\forall\,0<\delta<\alpha/2, \,0<\alpha<2,
$$
$$
\int_0^t\|u(s)\|_{\dot{H}^{\alpha/2}}^{2-\frac{2}{1+\alpha}}ds \leq \mathcal{C}_0,\; \,1<\alpha\leq 2,
$$
and
$$
\int_0^t\|u(s)\|_{\dot{W}^{1,1}}^2ds \leq \mathcal{C}_0,\; \text{ if }\alpha=2,
$$ 
\item if $f(y)=y^r/r$, $1<r<2$, the function $u$ gain the following regularity
$$
\int_0^t\|u(s)\|_{\dot{W}^{\alpha/2r-\delta,r}}^{2r}ds\leq C_\delta\mathcal{C}_0,\;\forall\,0<\delta<\alpha/2r, \,0<\alpha<2,
$$
\item if $f(y)=y^2/2$, the function $u$ gain the following regularity
$$
\int_0^t\|u(s)\|_{\dot{H}^{\alpha/2}}^{2}ds\leq \mathcal{C}_0,\;\forall\,0<\alpha<2,
$$
\end{itemize}
\end{teo}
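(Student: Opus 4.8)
The plan is to derive an entropy--entropy-dissipation balance for the functional $\Theta$ from \eqref{Theta} and then feed the resulting control of the dissipation into the fractional Fisher information inequalities of Lemma \ref{lemaentropy} and Lemma \ref{lemaentropy2}. Throughout I work with the smooth (and, where needed, strictly positive, cf. \eqref{positiv}) solutions furnished by Theorem \ref{local}, so that all the manipulations below are a priori estimates on $[0,T^*)$.

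First I would establish \eqref{Lyapunovbalance}. Multiply the first equation in \eqref{eq:6b} by $\Theta'(u)$ and integrate over $\TT$. The parabolic term produces $-\mu\int_\TT(-\Delta)^{\alpha/2}u\,\Theta'(u)\,dx$, and the time derivative gives $\tfrac{d}{dt}\int_\TT\Theta(u)\,dx$. Since $\Theta'(\xi)=\int_1^\xi f'(\chi)/\chi\,d\chi$ changes sign exactly at $\xi=1$ with $\Theta(1)=0$, one has $\Theta\ge0$, so $\int_\TT\Theta(u)\,dx=\|\Theta(u)\|_{L^1}$. The transport term is the crucial one: integrating by parts and using $\Theta''(u)=f'(u)/u$,
\[
\int_\TT\pax(uq)\,\Theta'(u)\,dx=-\int_\TT uq\,\Theta''(u)\,\pax u\,dx=-\int_\TT q\,\pax f(u)\,dx .
\]
Invoking the second equation $\pax f(u)=\pat q$ turns the right-hand side into $-\tfrac12\tfrac{d}{dt}\|q\|_{L^2}^2$. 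Collecting terms yields the differential identity $\tfrac{d}{dt}\big(\|\Theta(u)\|_{L^1}+\tfrac12\|q\|_{L^2}^2\big)+\mu\int_\TT(-\Delta)^{\alpha/2}u\,\Theta'(u)\,dx=0$, and integrating in time gives \eqref{Lyapunovbalance} (indeed with equality). The dissipation integrand is nonnegative by the Stroock--Varopoulos inequality, $\Theta$ being convex, so \eqref{Lyapunovbalance} furnishes the genuine bound $\mu\int_0^t\int_\TT(-\Delta)^{\alpha/2}u\,\Theta'(u)\,dx\,ds\le\|\Theta(u_0)\|_{L^1}+\tfrac12\|q_0\|_{L^2}^2=:E_0$.

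Next, in the regime \eqref{lowerf'} the bound $f'\ge C_1$ gives $\Theta''(z)=f'(z)/z\ge C_1/z$, so $\Gamma=\Theta'$ satisfies \eqref{auxphi}. Integrating the first equation in \eqref{eq:6b} over $\TT$ shows that $\|u\|_{L^1}=\|u_0\|_{L^1}$ is conserved (using $u\ge0$). I would then apply the pointwise-in-time inequalities \eqref{aux2} and \eqref{eq:10d} with $\Gamma=\Theta'$, integrate them in $t$, and use the dissipation bound $E_0$; the conserved mass makes the prefactors $\|u\|_{L^1}$ harmless and yields the three displayed estimates, the case $\alpha=2$ relying on the Remark that \eqref{aux2} holds with $\delta=0$. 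For the power kinetics $f(y)=y^r/r$ the dissipation is explicit, $\Theta'(u)=(u^{r-1}-1)/(r-1)$, whence $\int_\TT(-\Delta)^{\alpha/2}u\,\Theta'(u)\,dx=\tfrac1{r-1}\int_\TT(-\Delta)^{\alpha/2}u\,u^{r-1}\,dx$. When $r=2$ this is exactly $\|u\|_{\dot{H}^{\alpha/2}}^2$, so the last bound is immediate from $E_0$. When $1<r<2$, $\Gamma=\Theta'$ fails \eqref{auxphi} near the origin and Lemma \ref{lemaentropy} does not apply directly; instead I would use the nonlinear Stroock--Varopoulos/C\'ordoba--C\'ordoba inequality $\int_\TT(-\Delta)^{\alpha/2}u\,u^{r-1}\,dx\ge c\,\|u^{r/2}\|_{\dot{H}^{\alpha/2}}^2$ followed by a fractional Gagliardo--Nirenberg--Sobolev inequality from Appendix \ref{sec9} to pass from $u^{r/2}\in\dot{H}^{\alpha/2}$ to $u\in\dot{W}^{\alpha/2r-\delta,r}$, at the cost of the loss $\delta>0$ and a factor controlled by the $L^\infty$ bound on $u$ from Theorem \ref{local}; integrating in time then gives the claim.

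The main obstacle is precisely this last case: matching the exponents (regularity $\alpha/2r$ and power $2r$) while absorbing the $\delta$-loss forces one to go through the composition/interpolation inequality rather than the clean lemmas, and one must carefully track how the $L^\infty$ and conserved $L^1$ norms enter the prefactors. A secondary, more technical point is justifying the formal manipulations in the balance identity at the level of the solutions of Theorem \ref{local}: when $f'\ge C_1$ the weight $\Theta'(u)$ has a logarithmic singularity as $u\to0$, so one should argue on the strictly positive solutions of \eqref{positiv} (or regularize $u$ by $u+\epsilon$ and pass to the limit) in order to legitimize the integration by parts.
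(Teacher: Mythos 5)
Your derivation of \eqref{Lyapunovbalance} and your treatment of the cases $f'\geq C_1$ and $r=2$ coincide with the paper's proof: the same Lyapunov functional $\mathcal{F}[u,q]=\int_\TT\Theta(u)\,dx+\tfrac12\|q\|_{L^2}^2$, the same cancellation $-u\Theta''(u)+f'(u)=0$ killing the cross terms, conservation of mass from positivity, and then Lemmas \ref{lemaentropy} and \ref{lemaentropy2} applied with $\Gamma=\Theta'$, which satisfies \eqref{auxphi} exactly because $\Theta''(z)=f'(z)/z\geq C_1/z$; the $\alpha=2$, $\dot{W}^{1,1}$ bound indeed rests on the Remark that \eqref{aux2} holds with $\delta=0$. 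Your closing caveat about legitimizing $\Theta'(u)$ near $u=0$ is also consistent with the paper, whose local theory (Step 2 of the proof of Theorem \ref{local}) propagates strict positivity.

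The genuine gap is the case $1<r<2$, precisely the one you flag as your ``main obstacle.'' The paper does not go through Stroock--Varopoulos followed by a composition/interpolation step with an $L^\infty$ prefactor; it applies Lemma \ref{lemaentropy3} of Appendix \ref{sec9} (quoted from \cite{BG4}) with $s=r-1$, which reads
\begin{equation*}
\|u\|_{\dot{W}^{\alpha/2r-\delta,r}}^{2r}\leq C(\alpha,r,\delta)\,\|u\|_{L^r}^{r}\int_\TT\Lambda^\alpha u\,u^{r-1}\,dx,
\end{equation*}
and combines it with the balance \eqref{L^rbalance}, which controls both $\sup_t\|u(t)\|_{L^r}$ and $\int_0^t\int_\TT\Lambda^\alpha u\,u^{r-1}\,dx\,ds$ by the data alone. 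Your proposed prefactor $\|u\|_{L^\infty}$ does not close the argument as stated: the theorem asserts a constant $\mathcal{C}_0(\alpha,u_0,q_0)$, and Theorem \ref{local} gives only a local-in-time $L^\infty$ bound; worse, these estimates are used in the proof of Theorem \ref{globalweak} uniformly in the regularization parameter, where no uniform $L^\infty$ control is available, so an $L^\infty$-dependent constant would be fatal there. The repair is cheap and avoids $L^\infty$ entirely: estimate the Gagliardo seminorm directly by Cauchy--Schwarz against the symmetrized dissipation, using $|a-b|\leq\tfrac{2}{r}\max(a,b)^{1-r/2}\,|a^{r/2}-b^{r/2}|$ so that $|u(x)-u(y)|^{2r}/|u^{r/2}(x)-u^{r/2}(y)|^{2}\lesssim(u(x)+u(y))^{r}$; with $\sigma=\alpha/2r-\delta$ the leftover kernel is $|x-y|^{-(1-2\delta r)}$, integrable on $\TT$, and it produces the factor $\|u\|_{L^r}^{r/2}$ rather than any sup norm. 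Squaring yields exactly the displayed lemma --- which, since it is already stated in the paper's appendix, you could simply have cited with $s=r-1$ instead of attempting to re-derive a weaker version.
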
 

Now we proceed with the global in time results. Our first global result regards the hyperviscous case $\alpha=2$:

\begin{prop}\label{globalalpha2} Let $(u_0,q_0)\in H^3(\TT)\times H^3(\TT)$ be the initial data and $f$ be an admissible kinetic function in terms of Definition \ref{admissible} satisfying either 
\begin{enumerate}
\item the bounds \eqref{lowerf'} and
$$
\|f''\|_{C^2}\leq C_2,
$$ 
for suitable constant $0<C_2$ or 
\item $f(y)=y^r/r$, for $1<r\leq2$.
\end{enumerate}
Assume that $0\leq u_0$, $\langle q_0\rangle=0$, $\alpha=2$ and $\mu>0$. Then there exist a unique classical solution $(u(t),q(t))$ to problem \eqref{eq:6b} verifying
$$
u\in L^\infty(0,T;H^3(\TT))\cap L^2(0,T;H^{4}(\TT)),
$$
$$
q\in L^\infty(0,T;H^3(\TT)),
$$
for every $0<T<\infty$. 
\end{prop}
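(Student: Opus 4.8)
The plan is to upgrade the local solution furnished by Theorem \ref{local} to a global one by a continuation argument. Observe first that for $\alpha=2$ we have $2+\alpha/2=3$, so Theorem \ref{local} already produces a maximal solution on some $[0,T^*)$ with $u\in L^\infty H^3\cap L^2\dot{H}^4$ and $q\in L^\infty H^3$. Since the local existence time depends only on $\|u_0\|_{H^3}$, $\|q_0\|_{H^3}$ and $\max u_0$ (and the latter is controlled by $\|u\|_{L^\infty}\lesssim\|u\|_{H^3}$ in one dimension), it suffices to show that $E(t):=\|u(t)\|_{H^3}^2+\|q(t)\|_{H^3}^2$ stays bounded on every finite interval $[0,T]$; a standard restart then forces $T^*=\infty$.

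Next I would collect the a priori bounds that do not involve $E$. Because the $u$-equation is in divergence form and $u\geq0$ is propagated, mass is conserved, $\|u(t)\|_{L^1}=\|u_0\|_{L^1}$. The balance \eqref{Lyapunovbalance} yields the uniform bound $\|q(t)\|_{L^2}\leq C_0$ and, after integrating by parts with $\alpha=2$, controls the dissipation $\int_0^t\int_\TT\frac{f'(u)}{u}|\pax u|^2$. Feeding this into the Fisher information inequalities of Lemma \ref{lemaentropy} (the Remark permits $\delta=0$ for $\alpha=2$) and Lemma \ref{lemaentropy2}, together with mass conservation and the one-dimensional embeddings $\dot{W}^{1,1}(\TT)\hookrightarrow L^\infty(\TT)$ and Gagliardo--Nirenberg, I obtain the crucial time-integrability $\int_0^T\|u(s)\|_{L^\infty}^2\,ds<\infty$ and $\int_0^T\|u(s)\|_{\dot{H}^1}^2\,ds<\infty$ for every finite $T$; in case (2) with $1<r<2$ the natural dissipated quantity is $\int_0^T\|u^{r/2}\|_{\dot{H}^1}^2\,ds$, from which the same $L^2_tL^\infty_x$ control follows by interpolation against $\|u\|_{L^1}$.

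The heart of the matter is the top-order energy estimate. Differentiating and using $\alpha=2$ gives $\tfrac{d}{dt}E+2\mu\|u\|_{\dot{H}^4}^2\leq(\text{nonlinear terms})$, the dissipation being present precisely because $\mu>0$. Expanding $\pax^4(uq)$ and $\pax^4 f(u)$ by Leibniz, the dangerous contributions pair the highest derivatives $\pax^4 u$ and $\pax^3 q$. Unlike the inviscid local theory, the exact cancellation behind the weight in Theorem \ref{local} is not needed here: parabolic smoothing lets me absorb every occurrence of $\pax^4 u$ into $\mu\|u\|_{\dot{H}^4}^2$ via Young's inequality, leaving a coefficient made of lower-order norms, and the task is to show each coefficient lies in $L^1(0,T)$. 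I would do this by distributing norms so that (i) $q$-derivatives are either estimated by $E^{1/2}$ or interpolated against the a priori bounded $\|q\|_{L^2}$, lowering the effective power of $E$; (ii) the lowest-order $u$-factors are absorbed by $\int_0^T\|u\|_{\dot{H}^1}^2$ or by $\|u\|_{L^\infty}\in L^2_t$ (entering through $\|u\|_{L^2}^2\leq\|u\|_{L^1}\|u\|_{L^\infty}$); and (iii) $\|f'(u)\|_{L^\infty}$ is controlled, globally via $\|f\|_{W^{4,\infty}}$ and the hypothesis $\|f''\|_{C^2}\leq C_2$ in case (1), and via $\|u\|_{L^\infty}^{r-1}\lesssim1+\|u\|_{L^\infty}$ in case (2). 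The outcome is a differential inequality $\tfrac{d}{dt}E+\mu\|u\|_{\dot{H}^4}^2\leq g(t)(1+E)$ with $g\in L^1(0,T)$, after which Gr\"onwall gives $\sup_{[0,T]}E\leq(1+E(0))\exp\!\big(\int_0^T g\big)<\infty$, contradicting a finite maximal time.

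I expect step (iii)'s bookkeeping to be the main obstacle. Since $q$ enjoys no smoothing, its $H^3$-norm is controlled only through $E$ itself and the representation $q(t)=q_0+\int_0^t\pax f(u)\,ds$, so the top-order terms must close using only the parabolic gain of $u$ and the a priori dissipative bounds, with essentially no margin—this is the sense in which $\alpha=2$ is critical. A secondary difficulty, specific to case (2) with $1<r<2$, is the degeneracy $f''(u)=(r-1)u^{r-2}\to\infty$ as $u\to0$: the terms it generates must be matched against the correspondingly degenerate dissipation $\int_0^T\|u^{r/2}\|_{\dot{H}^1}^2\,ds$ rather than estimated crudely.
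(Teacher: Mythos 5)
Your overall skeleton (dissipative a priori bounds, energy estimates, Gr\"onwall, continuation) matches the paper's, but the pivotal step---a \emph{single} top-order differential inequality $\tfrac{d}{dt}E+\mu\|u\|_{\dot{H}^4}^2\leq g(t)(1+E)$ with $g\in L^1(0,T)$, obtained by absorbing every $\pax^4u$ into the dissipation and ``distributing norms''---is not achievable from the a priori information you actually have ($\|q\|_{L^2}\in L^\infty_t$, $\|u\|_{L^\infty}\in L^2_t$, $\int_0^T\|u\|_{\dot{H}^1}^2\,ds<\infty$), and this is a genuine gap. Concretely: after symmetrizing, the transport part of the $\dot{H}^3$ estimate for $u$ contains $\tfrac12\int_\TT(\pax^3u)^2\,\pax q\,dx\leq\tfrac12\|\pax q\|_{L^\infty}\|u\|_{\dot{H}^3}^2$. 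Since $q$ enjoys no smoothing, $\|\pax q\|_{L^\infty}$ is controlled only through $E$ itself: even the optimal interpolation against the bounded quantity gives $\|\pax q\|_{L^\infty}\lesssim\|q\|_{L^2}^{1/4}\|q\|_{\dot{H}^2}^{3/4}\lesssim E^{3/8}$, so this term is $O(E^{11/8})$; trading the $u$-factors against the dissipation via $\|u\|_{\dot{H}^3}\lesssim\|u\|_{L^2}^{1/4}\|u\|_{\dot{H}^4}^{3/4}$ and Young only converts it into $C\|\pax q\|_{L^\infty}^{4}\|u\|_{L^2}^2=O(E^{3/2})$, with no time-integrable prefactor. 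The analogous cross terms in the $\dot{H}^3$ estimate for $q$ (e.g.\ $f''(u)\pax u\,\pax^3u$ paired with $\pax^3q$) behave the same way. A Gr\"onwall inequality that is superlinear in $E$ with an $L^1$ coefficient does not propagate bounds to arbitrary $T$; it reproduces local existence, which you already had. (A smaller slip: your claim that $\int_0^T\|u\|_{\dot{H}^1}^2\,ds<\infty$ follows from the Fisher-information lemmas does not go through either, since $\|u\|_{\dot{H}^1}^2\leq\|u\|_{L^\infty}\int_\TT\frac{f'(u)}{u}|\pax u|^2\,dx$ pairs an $L^2_t$ quantity with an $L^1_t$ one; the paper instead extracts it from the refined $L^2$ estimate $\tfrac{d}{dt}\|u\|_{L^2}^2+\|u\|_{\dot{H}^1}^2\leq\|u\|_{L^\infty}^2\|q\|_{L^2}^2$, whose right-hand side is in $L^1(0,T)$.)

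The missing idea, and the way the paper closes the argument, is a \emph{staged} bootstrap rather than a one-shot top-order estimate: first close the $H^1$ level for $(u,q)$ jointly, where the only coefficient appearing is $c(1+\|u\|_{L^\infty})^2\in L^1(0,T)$, so Gr\"onwall is linear and yields $\|u\|_{\dot{H}^1},\|q\|_{\dot{H}^1}\in L^\infty_t$ together with $\int_0^T\|u\|_{\dot{H}^2}^2\,ds<\infty$; then attack the $H^2$ level, where these freshly harvested uniform bounds render the coefficients admissible (now $\|q\|_{L^\infty}\lesssim\|q\|_{H^1}$ is a constant, $\|u\|_{\dot{H}^1}^{10}$ is integrable on $[0,T]$, and the term $\|\pax^3u\|_{L^2}^2$ produced by the $q$-equation is absorbed by the $u$-dissipation generated at this same level); then repeat once more at $H^3$. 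It is exactly this hierarchical recycling---each level's time-integrated parabolic gain and uniform bounds becoming the $L^1_t$ or $L^\infty_t$ coefficients of the next level---that makes every Gr\"onwall linear and the bounds global; without it the top-order estimate cannot be closed. Your observation that in case (2) with $1<r<2$ the natural dissipated quantity is $\int_0^T\|\pax(u^{r/2})\|_{L^2}^2\,ds$, yielding $u\in L^{2r}_tL^\infty_x$ by interpolation, is a sound substitute for the $\dot{W}^{1,1}$ route of case (1) at the bottom of the ladder, but it does not repair the top-order closure.
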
 

We introduce our definition of global weak solution:
\begin{defi}\label{weaksol} $(u,q)\in L^\infty(0,T;L^1)\times L^\infty(0,T;L^2)$ is a global weak solution to \eqref{eq:6b} if for all $T>0$, $\phi,\psi\in \mathcal{D}([-1,T)\times \TT)$ we have
$$
\int_0^T\int_\TT -\pat \phi u+\mu u(-\Delta)^{\alpha/2}\phi+ uq\pax\phi dxds+\int_\TT u_0\phi(0)dx=0,
$$
$$
\int_0^T\int_\TT -\pat \psi q+f(u)\pax\psi dxds+\int_\TT q_0\psi(0)dx=0.
$$
\end{defi}

Equipped with Theorem \ref{globalalpha2} we can prove the global existence of weak solutions for \eqref{eq:6b} when $f(u)=u^r/r$, $1\leq r\leq 2$:
\begin{teo}\label{globalweak} Let $(u_0,q_0)\in L^2(\TT)\times L^2(\TT)$ be the initial data and $f(y)=y^r/r$, $1\leq r\leq2$ be the kinetic function. Assume that $0\leq u_0$, $\langle q_0\rangle=0$, $2-r<\alpha\leq 2$ and $\mu>0$. Then there exist at least one global weak solution (in the sense of Definition \ref{weaksol}) $(u(t),q(t))$ to problem \eqref{eq:6b} verifying
$$
u\in L^\infty(0,\infty;L^r(\TT)),
$$
$$
q\in L^\infty(0,\infty;L^2(\TT)).
$$
Furthermore, the solution $u$ gains the following regularity
\begin{itemize}
\item for $r=1$,
$$
u\in L^2(0,\infty;W^{\alpha/2-\delta,1}(\TT)),\;0<\delta\ll1,
$$
\item for $1<r<2$,
$$
u\in L^{2r}(0,\infty;W^{\alpha/2r-\delta,r}(\TT)),\;0<\delta\ll1,
$$
\item for $r=2$,
$$
u\in L^2(0,\infty;H^{\alpha/2}(\TT)).
$$
\end{itemize}
\end{teo}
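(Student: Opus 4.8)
The plan is to obtain the weak solution as the limit of a sequence of smooth global solutions produced by a vanishing hyperviscosity regularization, passing to the limit with the help of the a priori bounds of Theorem \ref{dissipation}. \emph{Approximation.} For $\epsilon>0$ I would mollify the data into $(u_0^\epsilon,q_0^\epsilon)\in H^3(\TT)\times H^3(\TT)$ with $u_0^\epsilon\geq0$, $\langle q_0^\epsilon\rangle=0$ and $(u_0^\epsilon,q_0^\epsilon)\to(u_0,q_0)$ in $L^2\times L^2$, and replace the diffusion $-\mu\Lambda^\alpha u$ in the first equation of \eqref{eq:6b} by $-\mu\Lambda^\alpha u-\epsilon\Lambda^2 u$. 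For each fixed $\epsilon>0$ the leading diffusion is of order two and the remaining term $-\mu\Lambda^\alpha u$ is lower order with a favorable sign, so the energy estimates underlying Proposition \ref{globalalpha2} go through unchanged and yield a unique global smooth solution $(u^\epsilon,q^\epsilon)$ with $u^\epsilon\geq0$ (nonnegativity being preserved by the maximum principle for the regularized parabolic problem).

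\emph{Uniform bounds.} I would then run the computation of Theorem \ref{dissipation} on the regularized system. Since $\Theta'$ is increasing, the extra term $\epsilon\int_\TT\Lambda^2u^\epsilon\,\Theta'(u^\epsilon)\,dx$ in the Lyapunov balance \eqref{Lyapunovbalance} is nonnegative; discarding it reproduces exactly the bounds of Theorem \ref{dissipation}, now uniform in $\epsilon$. In particular $q^\epsilon$ is bounded in $L^\infty(0,\infty;L^2)$ and, using the conservation of mass $\int_\TT u^\epsilon=\int_\TT u_0^\epsilon$ together with the control of $\|\Theta(u^\epsilon)\|_{L^1}$, $u^\epsilon$ is bounded in $L^\infty(0,\infty;L^r)$. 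Moreover the Fisher-information gains of Theorem \ref{dissipation} (ultimately coming from Lemmas \ref{lemaentropy} and \ref{lemaentropy2}) give, uniformly in $\epsilon$, $u^\epsilon\in L^2_t\dot{W}^{\alpha/2-\delta,1}$ when $r=1$, $u^\epsilon\in L^{2r}_t\dot{W}^{\alpha/2r-\delta,r}$ when $1<r<2$, and $u^\epsilon\in L^2_t\dot{H}^{\alpha/2}$ when $r=2$.

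\emph{Compactness.} This is the crux. The hypothesis $\alpha>2-r$ is exactly what makes the gained spatial regularity compactly embed into $L^2(\TT)$: in one dimension the Sobolev embedding $\dot{W}^{\alpha/2r-\delta,r}\hookrightarrow L^{p_1}$ with $1/p_1=(2-\alpha)/(2r)+\delta$ has $p_1>2$ once $\delta$ is small and $\alpha>2-r$ (and similarly $W^{\alpha/2-\delta,1}\hookrightarrow\hookrightarrow L^2$ when $\alpha>1$, while $H^{\alpha/2}\hookrightarrow\hookrightarrow L^2$ for every $\alpha>0$). Hence in each case the gain yields a uniform bound in $L^{p_0}_t X$ with $X\hookrightarrow\hookrightarrow L^2(\TT)$ and $p_0\in\{2,2r\}$, and a fortiori a uniform bound in $L^{p_0}_t L^{p_1}$ with $p_1>2$. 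To apply the Aubin--Lions--Simon lemma I also need a uniform time-derivative bound: from $\pat u^\epsilon=-\mu\Lambda^\alpha u^\epsilon-\epsilon\Lambda^2u^\epsilon+\pax(u^\epsilon q^\epsilon)$ I would estimate $\mu\Lambda^\alpha u^\epsilon$ and $\epsilon\Lambda^2 u^\epsilon$ in $H^{-M}$ using $u^\epsilon\in L^{p_0}_tL^2$ (the latter carrying a factor $\epsilon$, hence vanishing in the limit), and $\pax(u^\epsilon q^\epsilon)$ in $H^{-M}$ using $u^\epsilon q^\epsilon\in L^{p_0}_tL^{p_2}$ with $1/p_2=1/p_1+1/2<1$ obtained from $q^\epsilon\in L^\infty_tL^2$. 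Thus $\pat u^\epsilon$ is uniformly bounded in $L^{s}(0,T;H^{-M})$ for some $s>1$, and on each finite interval $(0,T)$ the Aubin--Lions--Simon lemma gives, along a subsequence, $u^\epsilon\to u$ strongly in $L^{p_0}(0,T;L^2)\subset L^2((0,T)\times\TT)$, while $q^\epsilon\rightharpoonup q$ weakly-$*$ in $L^\infty_tL^2$.

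\emph{Passage to the limit.} With $u^\epsilon\to u$ strongly in $L^2_{t,x}$ and $q^\epsilon\rightharpoonup q$ weakly in $L^2_{t,x}$ on $(0,T)$, the bilinear term converges, $\int u^\epsilon q^\epsilon\pax\phi\to\int uq\pax\phi$, since $u^\epsilon\pax\phi\to u\pax\phi$ strongly in $L^2_{t,x}$. For the kinetic term I note $u^\epsilon\to u$ a.e. and $\{(u^\epsilon)^r\}$ is uniformly integrable on $(0,T)\times\TT$ (it is bounded in $L^{p_0/r}_tL^{p_1/r}_x$ with both exponents $>1$), so Vitali's theorem yields $(u^\epsilon)^r\to u^r$ in $L^1_{t,x}$ and hence $\int f(u^\epsilon)\pax\psi\to\int f(u)\pax\psi$; finally $\epsilon\int u^\epsilon\Lambda^2\phi\to0$. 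Passing to the limit in the weak formulation of the regularized system then produces a pair $(u,q)$ satisfying Definition \ref{weaksol}, and the claimed regularity of $u$ follows by weak lower semicontinuity from the $\epsilon$-uniform gains. The main obstacle is precisely the compactness step: one must extract strong $L^2_{t,x}$ compactness for $u$ out of the Fisher-information gains alone, since there is neither smoothing nor compactness available for $q$, and it is exactly the condition $\alpha>2-r$ that lifts the gained regularity above the $L^2$ embedding threshold.
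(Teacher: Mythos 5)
Your proposal follows essentially the same route as the paper: a vanishing-viscosity approximation whose global solvability rests on the $\alpha=2$ theory of Proposition \ref{globalalpha2}, uniform bounds obtained by running the Lyapunov/Fisher-information estimates of Theorem \ref{dissipation} on the regularized system, Aubin--Lions compactness in which $\alpha>2-r$ is exactly the condition making the gained space embed compactly into $L^2(\TT)$, and a strong--weak limit in the nonlinear terms (the paper regularizes both equations and treats $f(u^\epsilon)\to f(u)$ by a direct mean-value/H\"older estimate rather than Vitali, but these are cosmetic differences). One tiny repair: for $r=2$ your uniform-integrability exponent $p_0/r$ equals $1$, so in that case justify $(u^\epsilon)^2\to u^2$ in $L^1$ directly via $\|(u^\epsilon)^2-u^2\|_{L^1}\leq \|u^\epsilon+u\|_{L^2_{t,x}}\|u^\epsilon-u\|_{L^2_{t,x}}$, which is exactly what the paper does.
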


\section{Preliminaries}\label{sec3}
\subsection*{Notation}
Given $f\in L^1(\TT^d)$, we denote
$$
\langle f\rangle=\frac{1}{|\TT^d|}\int_{\TT^d}f(x)dx.
$$
We write $\mathcal{M}_0$ and $c$ for constant that may change from line to line but only depends on $u_0(x),q_0(x)$ and the kinetic function $f(x)$. We write $\mathcal{P}$ for a generic polynomial that may change from line to line and whose coefficients depends only on $u_0(x),q_0(x)$ and $f(x)$. We consider $\mathcal{H}_\epsilon$ the periodic heat kernel at time $t=\epsilon$.

\subsection*{Functional spaces}
We write $H^s(\TT^d)$ for the usual $L^2$-based periodic Sobolev spaces:
$$
H^s(\TT^d)=\left\{u\in L^2(\TT^d) \text{ s.t. }(1+|\xi|^s)\hat{u}\in l^2\right\},
$$
with norm
$$
\|u\|_{H^s}^2=\|u\|_{L^2}^2+\|u\|_{\dot{H}^s}^2, \quad \|u\|_{\dot{H}^s}=\|\Lambda^s u\|_{L^2}.
$$
The fractional $L^p$-based Sobolev-Slobodeckij spaces, $W^{s,p}(\TT^d)$, are defined as
$$
W^{s,p}=\left\{u\in L^p(\TT^d), \pax^{\lfloor s\rfloor} u\in L^p(\TT^d), \frac{|\pax^{\lfloor s\rfloor}u(x)-\pax^{\lfloor s\rfloor}u(y)|}{|x-y|^{\frac{d}{p}+(s-\lfloor s\rfloor)}}\in L^p(\TT^d\times\TT^d)\right\},
$$
with norm
$$
\|u\|_{W^{s,p}}^p=\|u\|_{L^p}^p+\|u\|_{\dot{W}^{s,p}}^p, 
$$
where
$$
\|u\|_{\dot{W}^{s,p}}^p=\|\pax^{\lfloor s\rfloor} u\|^p_{L^p}+\int_{\TT^d}\int_{\TT^d}\frac{|\pax^{\lfloor s\rfloor}u(x)-\pax^{\lfloor s\rfloor}u(y)|^p}{|x-y|^{d+(s-\lfloor s \rfloor)p}}dxdy.
$$

In the case of functions defined on $\RR^d$ we have definitions with straightforward modifications.


\section{Proof of Lemmas \ref{lemaentropy} and \ref{lemaentropy2}}\label{sec4}
\subsection{Proof of Lemma \ref{lemaentropy}}\label{sec4a}
We write the proof in the case $\Omega=\RR^d$, $0<\alpha<2$, being the case $\Omega=\TT^d$ and the case $\alpha=2$ analogous. Changing variables, we have that
\begin{align*}
I&=\int_{\RR^d}\Lambda^\alpha u(x)\Gamma(u(x))dx\\
&=\int_{\RR^d}\text{P.V.}\int_{\RR^d} \frac{u(x)-u(y)}{|x-y|^{d+\alpha}}\Gamma(u(x))dydx\\
&=-\int_{\RR^d}\text{P.V.}\int_{\RR^d} \frac{u(x)-u(y)}{|x-y|^{d+\alpha}}\Gamma(u(y))dydx.
\end{align*}
Therefore, since $\Gamma$ is non-decreasing,
$$
I=\int_{\RR^d}\text{P.V.}\int_{\RR^d} \frac{u(x)-u(y)}{|x-y|^{d+\alpha}}\left(\Gamma(u(x))-\Gamma(u(y))\right)dydx\geq0,
$$
and, using
$$
\Gamma(u(x))-\Gamma(u(y))=\int_0^1\frac{d}{ds}\Gamma(su(x)+(1-s)u(y))ds,
$$
and \eqref{auxphi}, we compute
\begin{align*}
I&=C(\alpha,d)\int_{\RR^d}\text{P.V.}\int_{\RR^d}\int_0^1 \frac{|u(x)-u(y)|^2}{|x-y|^{d+\alpha}}\Gamma'(su(x)+(1-s)u(y))dsdydx\\
&\geq \frac{C(\alpha,d,\Gamma)}{\|u\|_{L^\infty}}\int_{\RR^d}\text{P.V.}\int_{\RR^d}\int_0^1 \frac{|u(x)-u(y)|^2}{|x-y|^{d+\alpha}}dydx\\
&\geq \frac{C(\alpha,d,\Gamma)}{\|u\|_{L^\infty}}\|\Lambda^{\alpha/2} u\|_{L^2}^2.
\end{align*}
Then, we obtain \eqref{aux1}. For the periodic case, after symmetrizing, we have that
\begin{eqnarray*}
I&=&c_{\alpha,d}\sum_{k\in\mathbb{Z}^d}\int_{\TT^d}
\text{P.V.}\int_{\TT^d}\frac{u(x)-u(y)}{|x-y-2\pi k|^{d+\alpha}}\Gamma(u(x))dydx\\
&\geq& c_{\alpha,d}\int_{\TT^d}\text{P.V.}\int_{\TT^d} \frac{u(x)-u(y)}{|x-y|^{d+\alpha}}\left(\Gamma(u(x))-\Gamma(u(y))\right)dydx.
\end{eqnarray*}
We have
\begin{eqnarray*}
\|u\|_{\dot{W}^{\alpha/2-\delta,1}}&=&\int_{\TT^d}\int_{\TT^d}\frac{|u(x)-u(y)|}{|x-y|^{d+\frac{\alpha}{2}-\delta}}dxdy\\
&=&\int_{\TT^d}\int_{\TT^d}\int_0^1 \bigg[\frac{|u(x)-u(y)|}{|x-y|^{d+\frac{\alpha}{2}-\delta}}\frac{|x-y|^{-\frac{d}{2}+\delta}}{|x-y|^{-\frac{d}{2}+\delta}}\\
&&\times \frac{\Gamma'(su(x)+(1-s)u(y))^{1/2}}{\Gamma'(su(x)+(1-s)u(y))^{1/2}}\bigg]dsdxdy\\
&\leq& I_1^{0.5}I_2^{0.5},
\end{eqnarray*}
with
\begin{align*}
I_1&=\int_{\TT^d}\int_{\TT^d}\int_0^1 \frac{(u(x)-u(y))^2}{|x-y|^{d+\alpha}}\Gamma'(su(x)+(1-s)u(y))dsdxdy\\
&=\int_{\TT^d}
\text{P.V.}\int_{\TT^d}\frac{u(x)-u(y)}{|x-y|^{d+\alpha}}(\Gamma(u(x))-\gamma(u(y)))dydx.
\end{align*}
$$
I_2=\int_{\TT^d}\int_{\TT^d}\int_0^1 \frac{1}{\Gamma'(su(x)+(1-s)u(y))|x-y|^{d-2\delta}}dsdxdy.
$$
Using \eqref{auxphi}, this latter integral is similar to the Riesz potential. Due to the positivity of $u$, we have
\begin{eqnarray*}
I_2&\leq& \frac{1}{c}\int_{\TT^d}\int_{\TT^d}\int_0^1 \frac{su(x)+(1-s)u(y)}{|x-y|^{d-2\delta}}dsdxdy\\
&=&\frac{1}{c}\int_{\TT^d}\frac{1}{|y|^{d-2\delta}}dy\|u\|_{L^1}.
\end{eqnarray*}
Consequently, we get \eqref{aux2}.

\subsection{Proof of Lemma \ref{lemaentropy2}}\label{sec4b}
Using Lemma \ref{lemaentropy} and \eqref{eq:10}, we have that
$$
\|u\|_{\dot{H}^{\alpha/2}(\RR)}^{2-\frac{2}{1+\alpha}}\leq C_2(\alpha,\Gamma)\|u\|_{L^1(\RR)}^{1-\frac{2}{1+\alpha}}\int_{\RR}\Lambda^\alpha u(x)\Gamma(u(x))dx. 
$$
For the periodic case we have \eqref{eq:10b}
$$
\|u\|_{\dot{H}^{\alpha/2}(\TT)}^{2}\leq C(\alpha,\Gamma)\|u\|_{L^1(\TT)}^{1-\frac{2}{1+\alpha}}\left(\|u\|_{\dot{H}^{\alpha/2}(\TT)}^{\frac{2}{1+\alpha}}+\|u\|_{L^1}^{\frac{2}{1+\alpha}}\right)\int_{\TT}\Lambda^\alpha u(x)\Gamma(u(x))dx.
$$
To simplify notation we define
$$
I=\int_{\TT}\Lambda^\alpha u(x)\Gamma(u(x))dx,
$$
so
$$
\|u\|_{\dot{H}^{\alpha/2}(\TT)}^{2}\leq C(\alpha,\Gamma)\|u\|_{L^1(\TT)}^{1-\frac{2}{1+\alpha}}\left(\|u\|_{\dot{H}^{\alpha/2}(\TT)}+\|u\|_{L^1(\TT)}\right)^{\frac{2}{1+\alpha}}I.
$$
Using
$$
(\|u\|_{\dot{H}^{\alpha/2}(\TT)}+\|u\|_{L^1(\TT)})^{2}\leq 2(\|u\|_{\dot{H}^{\alpha/2}(\TT)}^2+\|u\|_{L^1(\TT)}^2),
$$
we obtain
$$
Q^{2}\leq C(\alpha,\Gamma)\|u\|_{L^1(\TT)}^{1-\frac{2}{1+\alpha}}Q^{\frac{2}{1+\alpha}}I+2\|u\|_{L^1(\TT)}^2,
$$
where
$$
Q=\|u\|_{\dot{H}^{\alpha/2}(\TT)}+\|u\|_{L^1(\TT)}.
$$
Finally, we estimate
$$
Q^{2}\leq C(\alpha,\Gamma)\|u\|_{L^1(\TT)}^{1-\frac{2}{1+\alpha}}Q^{\frac{2}{1+\alpha}}I+2\|u\|_{L^1(\TT)}^{2-\frac{2}{1+\alpha}}Q^{\frac{2}{1+\alpha}},
$$
so
$$
\|u\|_{\dot{H}^{\alpha/2}(\TT)}^{2-\frac{2}{1+\alpha}}\leq Q^{2-\frac{2}{1+\alpha}}\leq C(\alpha,\Gamma)\|u\|_{L^1(\TT)}^{1-\frac{2}{1+\alpha}}\left(I+\|u\|_{L^1(\TT)}\right).
$$

\section{Proof of Theorem \ref{local}: Local existence of strong solutions}\label{sec5}
As the construction of suitable regularized problems is not an issue, we focus on the energy estimates. 

Recalling \eqref{gamma}, we define the following energy functional
\begin{equation}\label{energy}
E(t)=\max_{0\leq s\leq t}\left\{\|u(s)\|^2_{H^{3}}+\gamma\|q(s)\|_{H^{3}}^2\right\}+\mu\int_0^t\|u(s)\|_{\dot{H}^{3+\alpha/2}}^2ds.
\end{equation}
Our goal is to obtain an inequality of the type
\begin{equation}\label{inequality}
E(t)\leq \mathcal{M}_0+\sqrt{t}\mathcal{Q}(E(t)),
\end{equation}
for certain constant $\mathcal{M}_0=\mathcal{M}_0(u_0,q_0,f)$ and polynomial $\mathcal{Q}$. The coefficients in $\mathcal{Q}$ depends only on $u_0,q_0$ and $f$. An inequality as \eqref{inequality} implies the existence of $T^*=T^*(u_0,q_0,f)$ such that $E(t)\leq 2\mathcal{M}_0$.

Let us assume first that $u_0$ satisfies \eqref{positiv}. Once this case has been established, in the last step we will recover the case where $u_0$ is non-negative.

\textbf{Step 1: Bootstrap assumptions;}
We assume that $(u,q)$ is a solution verifying
\begin{equation}\label{bootstrap1}
E(t)<3\mathcal{M}_0,
\end{equation}
\begin{equation}\label{bootstrap2}
\min_x u(t)>\frac{1}{4}\min_x u_0>0.
\end{equation}
\begin{equation}\label{bootstrap3}
\max_x u(t)<4\max_x u_0.
\end{equation}
In order we conclude the proof, we will need to prove that stricter bounds hold.

\textbf{Step 2: Positivity and mass conservation;} Given a positive initial data, $u_0>0$, we have that $m(t)=\min_{x}u(x,t)$ solves 
$$
\frac{d}{dt}m(t)\geq m(t)\pax q(x_t,t),
$$
where $x_t\in\TT$ is such that 
$$
\min_x u(x,t)=u(x_t,t),
$$
(see Burczak \& Granero-Belinchon \cite{BG} for a proof). Thus,
$$
\min_x u(x,t)\geq \min_x u_0(x)e^{\int_0^t \pax q(x_s,s)ds}>0
$$
and, consequently,
\begin{equation}\label{consermass}
\|u(t)\|_{L^1}=\|u_0\|_{L^1}.
\end{equation} 
With the same approach,
$$
\max_x u(x,t)\leq \max_x u_0(x)e^{\int_0^t \pax q(x_s,s)ds}.
$$
In particular, notice that we can find $T^1=T^1(u_0,q_0)$ such that
\begin{align}\label{minmax}
\min_{x,t} u(x,t)&\geq \min_x u_0(x)e^{-3c\mathcal{M}_0T^1}>\frac{1}{2}\min_x u_0>0,\\
\max_{x,t} u(x,t)&\leq \max_x u_0(x)e^{3c\mathcal{M}_0T^1}<2\max_x u_0,
\end{align}
if
$$
T\leq T^1.
$$
Thus, the second bootstrap assumptions \eqref{bootstrap2} and \eqref{bootstrap3} hold true. As we are interested in local existence, from this point onwards, we are going to restrict ourselves to $t\in [0,T^1]$.

Notice also that
\begin{equation}\label{consermass2}
\langle q(t)\rangle=\langle q_0\rangle.
\end{equation} 

\textbf{Step 3: Estimates for $u$;} After an integration by parts, we have
$$
\frac{d}{dt}\|u\|_{L^2}^2=-2\mu\int_{\TT}|\Lambda^{\alpha/2}u|^2dx-2\int_{\TT}u q \frac{\partial_t q}{f'(u)}dx.
$$
Using
$$
\int_{\TT}\frac{u}{f'(u)} q \partial_t qdx=\frac{1}{2}\frac{d}{dt}\int_{\TT}\frac{u}{f'(u)} q^2dx-\frac{1}{2}\int_{\TT}\pat \left(\frac{u}{f'(u)}\right) q^2dx,
$$
we obtain
$$
\frac{d}{dt}\left(\|u\|_{L^2}^2+\left\|\sqrt{\frac{u}{f'(u)}}q\right\|_{L^2}^2\right)+2\mu\int_{\TT}|\Lambda^{\alpha/2}u|^2dx=\int_{\TT}\pat\left(\frac{u}{f'(u)}\right) q^2dx.
$$
We have
$$
\int_0^t\int_{\TT}\pat u\left(\frac{1}{f'(u)}-\frac{f''(u)}{(f'(u))^2}\right) q^2dxds\leq tc\max_{0\leq s\leq t}\|\pat u(s)\|_{L^2}^2\|q(s)\|_{L^4}^2,
$$
thus, recalling \eqref{gamma},
\begin{align}
\|u(t)\|_{L^2}^2+\gamma\left\|q\right\|_{L^2}^2+2\mu\int_0^t\|u(s)\|_{\dot{H}^{\alpha/2}}^2ds&\leq \|u_0\|_{L^2}^2+\left\|\sqrt{\frac{u_0}{f'(u_0)}}q_0\right\|_{L^2}^2\nonumber\\
&\quad+t\mathcal{P}(E(t)).\label{uL^2}
\end{align}

\textbf{Step 4: Estimates for $\pax^3 u$;} We compute
$$
\frac{1}{2}\frac{d}{dt}\|u\|^2_{\dot{H}^{3}}+\mu\|u\|^2_{\dot{H}^{3+\alpha/2}}=\int_\TT \pax^4(uq)\pax^3u dx.
$$
Integrating by parts, we have that
\begin{align}
I&=\int_\TT \pax^4(uq)\pax^3udx\nonumber\\
&=-\int_\TT \pax^3(uq)\pax^4udx\nonumber\\
&=-\int_\TT \left(\pax^3uq+u\pax^3q+3\pax u \pax^2q+3\pax q\pax^2u\right)\pax^4udx\nonumber\\
&\leq c\|u\|_{\dot{H}^3}\left(\|u\|_{\dot{H}^3}\|\pax q\|_{L^\infty}+\|u\|_{\dot{W}^{2,4}}\|q\|_{\dot{W}^{2,4}}+\|q\|_{\dot{H}^3}\|\pax u\|_{L^\infty}\right)\nonumber\\
&\quad -\int_\TT u\pax^3q\pax^4udx.\label{eq:I}
\end{align}
In the remainder we have to find an energy term. We compute
$$
\pax^4 u=\frac{\pat \pax^3 q-f'''' (\pax u)^4-6 f''' (\pax u)^2 \pax^2 u-f'' [3 (\pax^2u)^2+4 \pax^3u \pax u]}{f'(u)},
$$
so, by Sobolev embedding,
\begin{align*}
J&=-\int_\TT u\pax^3q\pax^4udx\\
&\leq -\int_\TT u\pax^3q\left(\frac{\pax^3\pat q}{f'(u)}dx\right)+c\|q\|_{\dot{H}^3}\|u\|^2_{\dot{H}^3}\left(1+\|u\|_{\dot{H}^3}^2\right)\\
&\leq -\frac{1}{2}\frac{d}{dt}\int_\TT (\pax^3q)^2\frac{u}{f'(u)}dx+c\|q\|_{\dot{H}^3}\left[\|q\|_{\dot{H}^3}\|\pat u\|_{L^\infty}+\|u\|^2_{\dot{H}^3}\left(1+\|u\|_{\dot{H}^3}^2\right)\right].
\end{align*}
Integrating in time and using \eqref{gamma}, we conclude
\begin{align}
\|u(t)\|_{\dot{H}^3}^2+\gamma\left\|q\right\|_{\dot{H}^3}^2+2\mu\int_0^t\|u(s)\|_{\dot{H}^{3+\alpha/2}}^2ds&\leq \|u_0\|_{\dot{H}^3}^2+\left\|\sqrt{\frac{u_0}{f'(u_0)}}\pax^3q_0\right\|_{L^2}^2\nonumber\\
&\quad+t\mathcal{P}(E(t)).\label{u_xxxL^2}
\end{align}

\textbf{Step 5: Uniform time $T^*$;} Collecting \eqref{uL^2} and \eqref{u_xxxL^2}, we obtain
\begin{align}
\|u(t)\|_{H^3}^2+\gamma\left\|q\right\|_{H^3}^2+2\mu\int_0^t\|u(s)\|_{\dot{H}^{3+\alpha/2}}^2ds&\leq \|u_0\|_{H^3}^2+\left\|\sqrt{\frac{u_0}{f'(u_0)}}\pax^3q_0\right\|_{L^2}^2\nonumber\\
&\quad+\left\|\sqrt{\frac{u_0}{f'(u_0)}}q_0\right\|_{L^2}^2+t\mathcal{Q}(E(t)),\label{En}
\end{align}
and, equivalently,
$$
E(t)\leq \mathcal{M}_0+t\mathcal{Q}(E(t)).
$$
This polynomial inequality implies the existence of $0<T^2=T^2(\mathcal{M}_0,\mathcal{Q})$ such that
$$
E(t)\leq 2\mathcal{M}_0,\;\forall t\leq T^2,
$$
(see Coutand \& Shkoller \cite{Coutand-Shkoller:well-posedness-free-surface-incompressible} or Cheng, Granero-Belinchon \& Shkoller \cite{CGS} for the details). We chose 
$$
T^*=\min\{T^1,T^2\},
$$
where $T^1$ was defined in \eqref{minmax}.

\textbf{Step 6: Uniqueness;} The uniqueness follows a standard approach. Assume that there exists two solutions $(u_1,q_1)$ and $(u_2,q_2)$ with finite energy for the same initial data $(u_0,q_0)$. Define $\bar{u}=u_1-u_2$, $\bar{q}=q_1-q_2$ and $\bar{f}=f(u_1)-f(u_2)$.
We have that
$$
\pat\bar{q}-\bar{f}\pax u_2=f'(u_1)\pax \bar{u},
$$
\begin{align*}
\frac{d}{dt}\|\bar{u}\|^2_{L^2}+2\mu\|\bar{u}\|^2_{\dot{H}^{\alpha/2}}&=-2\int_\TT(\bar{u}q_1-u_2\bar{q})\pax\bar{u}dx\\
&=\int_\TT\bar{u}^2\pax q_1dx-2\int_\TT \frac{u_2}{f'(u_1)}\bar{q}\left(\pat\bar{q}-\bar{f}\pax u_2\right)dx.
\end{align*}
We compute
\begin{align*}
\frac{d}{dt}\left(\|\bar{u}\|^2_{L^2}+\gamma\|\bar{q}\|_{L^2}^2\right)+2\mu\|\bar{u}\|^2_{\dot{H}^{\alpha/2}}&\leq \|\bar{u}\|_{L^2}^2\|\pax q_1\|_{L^\infty}+\|\bar{q}\|_{L^2}^2\left\|\pat\left(\frac{u_2}{f'(u_1)}\right)\right\|_{L^\infty}\\
&\quad+\|\bar{q}\|_{L^2}\|\bar{u}\|_{L^2}\left\|\frac{\pax u_2 u_2}{f'(u_1)}\right\|_{L^\infty}.
\end{align*}
Using Gronwall's inequality, we conclude the uniqueness.

\textbf{Step 7: Non-negative $u_0$;} In the previous steps we have proved that if $u_0>0$, then there exists a unique local solution $(u,q)$ such that
$$
u\in L^\infty_t H^{3}_x\cap L^2_tH^{3+\alpha/2}_x,\;q\in L^\infty_t H^3_x,
$$
where the bound $q\in L^\infty_t H^3_x$ depends on $\min\{u_0\}$. To recover the case with non-negative $u_0$, \emph{i.e.} where $u_0$ may vanish in some region, we consider  the new initial data $u_0^\epsilon=\epsilon+u_0$, where $0<\epsilon\ll1$. For this new initial data we can construct a unique local solution following the previous steps 1-6. Then we have an approximate solution verifying 
\begin{align}
\|u^\epsilon(t)\|_{H^3}^2+2\mu\int_0^t\|u^\epsilon(s)\|_{\dot{H}^{3+\alpha/2}}^2ds&\leq 2+2\|u_0\|_{H^3}^2+2\left\|\sqrt{\frac{\epsilon+u_0}{f'(\epsilon+u_0)}}\pax^3q_0\right\|_{L^2}^2\nonumber\\
&\quad+\left\|\sqrt{\frac{\epsilon+u_0}{f'(\epsilon+u_0)}}q_0\right\|_{L^2}^2,\label{En2}
\end{align}
To pass to the limit, we use that $q$ satisfy
$$
q(x,t)=q_0(x)+\int_0^t f'(u(x,s))\pax u(x,s)ds,
$$
so
$$
\max_{0\leq t\leq T^*}\|q(t)\|_{2+\alpha/2}\leq \|q_0\|_{2+\alpha/2}+C_f\sqrt{T^*}\sqrt{\int_0^{T^*} \|u^\epsilon(s)\|_{3+\alpha/2}^2ds}.
$$
Thus, using \eqref{En2} and the properties of $f$ (see Definition \ref{admissible}), we conclude
$$
q\in L^\infty_t H^{2+\alpha/2}_x,
$$
uniformly in $\epsilon$.

\section{Proof of Theorem \ref{dissipation}: Global bounds}\label{sec6}
\textbf{Step 1: Admissible $f$ satisfying $f'\geq C_1$;} Define the functional
$$
\mathcal{F}[u,q]=\int_{\TT}\Theta(u)dx+\frac{1}{2}\int_{\TT}q^2dx,
$$
where $\Theta$ was defined in \eqref{Theta}. Notice that 
$$
\Theta'(s)=\int_1^s\frac{f'(\chi)}{\chi}d\chi,
$$
which implies that $\Theta'(s)\geq0$ if $s\geq1$ and $\Theta'(s)\leq 0$ if $0< s\leq 1$. We also have
$$
\Theta(1)=0,\Theta'(1)=0,
$$
$$
\Theta''(s)=\frac{f'(s)}{s}\geq0,
$$
which means that $\Theta\geq0$. Thus, the functional $\mathcal{F}$ is bounded below:
$$
0\leq \mathcal{F}[u,q].
$$
Then we have that
\begin{align*}
\frac{d}{dt}\mathcal{F}[u,q]&=\int_\TT \pat u\Theta'(u)dx+\int_{\TT}q\pat qdx\\
&=-\int_\TT\Lambda^{\alpha}u\Theta'(u)dx+\int_\TT \left(-u\Theta''(u)+f'(u) \right)q\pax udx,
\end{align*}
so
\begin{equation}\label{qL^2}
\mathcal{F}[u,q]+\int_0^t\int_\TT\Lambda^{\alpha}u\Theta'(u)dx\leq \mathcal{F}[u_0,q_0].
\end{equation}
As a consequence of this dissipation effect, the conservation of mass \eqref{consermass} and Lemmas \ref{lemaentropy} and \ref{lemaentropy2}, we have the global bounds
\begin{align*}
\int_0^t\|u\|_{\dot{W}^{\alpha/2-\delta,1}}^2ds&\leq C(\alpha,\delta)\|u_0\|_{L^1}\int_0^t\int_{\TT}\Lambda^\alpha u\Theta'(u)dxds\\
&\leq C(\alpha,\delta)\|u_0\|_{L^1}\mathcal{F}[u_0,q_0],
\end{align*}
and, in case $\alpha>1$, 
\begin{align*}
\int_0^t\|u\|_{\dot{H}^{\alpha/2}}^{2-\frac{2}{1+\alpha}}ds&\leq C(\alpha)\|u_0\|_{L^1}^{1-\frac{2}{1+\alpha}}\left(\int_0^t\int_{\TT}\Lambda^\alpha u\Theta'(u)dxds+t\|u_0\|_{L^1}\right)\\
&\leq C(\alpha)\|u_0\|_{L^1}^{1-\frac{2}{1+\alpha}}\left(\mathcal{F}[u_0,q_0]+t\|u_0\|_{L^1}\right).
\end{align*}

Notice that if the dimension is higher than 1, $d\geq2$, the dissipative character of the system remains unchanged and the proof for the cases with higher dimensions follow straightforwardly.

\textbf{Step 2: $f(y)=y^r/r$, $1<r\leq 2$;} Notice that for $f(y)=y^r/r$ we can not apply the argument in Step 1. The reason is that the degeneracy of $f'(y)=y^{r-1}$ is an obstacle for \eqref{auxphi}. Consequently, we can not invoke Lemmas \ref{lemaentropy} and \ref{lemaentropy2} as they are stated. Instead, we notice that in this case we have
$$
\Theta(u)=\int_1^u \frac{1}{r-1}\xi^{r-1}-\frac{1}{r-1}d\xi=\frac{u^r}{r(r-1)}-\frac{u}{r-1}-\left(\frac{1}{r(r-1)}-\frac{1}{r-1}\right).
$$
Thus, computing the evolution of $\mathcal{F}$ and using the conservation of mass, we obtain
\begin{equation}\label{L^rbalance}
\frac{\|u(t)\|_{L^r}^r}{r(r-1)}+\frac{\|q(t)\|_{L^2}^2}{2}+\frac{\mu}{r-1}\int_0^t\int_\TT \Lambda^\alpha uu^{r-1} dxds\leq \frac{\|u_0\|_{L^r}^r}{r(r-1)}+\frac{\|q_0\|_{L^2}^2}{2}.
\end{equation}
Now, in the case $1<r<2$, we invoke Lemma \ref{lemaentropy3} (with $s=r-1$) to obtain the lower bound
$$
0\leq \int_\TT \Lambda^\alpha uu^{r-1} dx,
$$
that implies the uniform-in-time estimates
$$
u\in L^\infty_tL^r_x\cap L^{2r}_tW^{\alpha/2r-,r}_x,\;q\in L^\infty_tL^2_x.
$$
In the case $r=2$, \eqref{L^rbalance} reduces to
\begin{equation}\label{L^2balance}
\|u(t)\|_{L^2}^2+\|q(t)\|_{L^2}^2+2\mu\int_0^t\|u(s)\|^2_{\dot{H}^{\alpha/2}}ds\leq \|u_0\|_{L^2}^2+\|q_0\|_{L^2}^2,
\end{equation}
that implies the uniform-in-time estimates
$$
u\in L^\infty_tL^2_x\cap L^{2}_tH^{\alpha/2}_x,\;q\in L^\infty_tL^2_x.
$$

\section{Proof of Proposition \ref{globalalpha2}: Global existence of strong solutions}\label{sec7}
Fix $0<T<\infty$ an arbitrary parameter and choose $\mu=1$ wlog. Due to Theorem \ref{dissipation}, the solution $(u,q)$ verifies
$$
\int_0^T\|u(s)\|_{L^\infty}^2ds\leq \int_0^T\|u(s)\|_{W^{1,1}}^2ds\leq \mathcal{M}_0,
$$
$$
\max_{0\leq t\leq T}\|q(t)\|_{L^2}^2\leq \mathcal{M}_0.
$$
Then we can refine the previous energy estimates and obtain that
$$
\frac{d}{dt}\|u\|_{L^2}^2+\|u\|_{\dot{H}^1}^2\leq \|u\|_{L^\infty}^2\|q\|_{L^2}^2,
$$
so, integrating,
$$
\max_{0\leq t\leq T}\|u(t)\|_{L^2}^2+\int_0^T \|u(s)\|_{\dot{H}^1}^2ds\leq \mathcal{M}_0.
$$
We also have
\begin{align*}
\frac{d}{dt}\|u\|_{\dot{H}^1}^2+2\|u\|_{\dot{H}^2}^2&= \int_\TT (\pax u)^2\pax qdx-2\int_\TT u\pax q\pax^2udx\\
&\leq c\|u\|_{L^\infty}\|q\|_{\dot{H}^1}\|u\|_{\dot{H}^2}.
\end{align*}
where we have used the inequality
\begin{equation}\label{GN1}
\|g\|_{\dot{W}^{1,4}}^2\leq 3\|g\|_{L^\infty}\|g\|_{\dot{H}^2}.
\end{equation}

We have that
\begin{align*}
\frac{d}{dt}\|q\|_{\dot{H}^1}^2&\leq 2\|f'(u)\pax^2u+f''(u)(\pax u)^2\|_{L^2}\|q\|_{\dot{H}^1}\\
&\leq 2\|u\|_{\dot{H}^2}(f'(0)+4C_2\|u\|_{L^\infty})\|q\|_{\dot{H}^1},
\end{align*}
where we have used
$$
f'(u)\leq f'(0)+C_2u
$$

Then,
\begin{align*}
\frac{d}{dt}\left(\|u\|_{\dot{H}^1}^2+\|q\|_{\dot{H}^1}^2\right)+\|u\|_{\dot{H}^2}^2&\leq c(\|u\|_{L^\infty}+1)\|q\|_{\dot{H}^1}\|u\|_{\dot{H}^2}\\
&\leq c(\|u\|_{L^\infty}+1)^2\|q\|_{\dot{H}^1}^2,
\end{align*}
and, using Gronwall's inequality
$$
\max_{0\leq t\leq T}\|u(t)\|_{\dot{H}^1}^2+\|q(t)\|_{\dot{H}^1}^2\leq \mathcal{M}_0e^{\int_0^Tc(\|u(s)\|_{L^\infty}+1)^2ds}\leq \mathcal{M}_0e^{\mathcal{M}_0(T+1)},
$$
$$
\int_0^T\|u(s)\|_{\dot{H}^2}^2ds\leq\mathcal{M}_0e^{\mathcal{M}_0(T+1)}.
$$

In the same way
\begin{align*}
\frac{d}{dt}\|q\|_{\dot{H}^2}^2&\leq c\left(\|f'(u)\pax^3u\|_{L^2}+\|f'''(u)(\pax u)^3\|_{L^2}+\|f''(u)\pax u \pax^2u \|_{L^2}\right)\|q\|_{\dot{H}^2}\\
&\leq c\left((1+\|u\|_{L^\infty})\|\pax^3u\|_{L^2}+\|\pax u\|^3_{L^6}+\|\pax u \pax^2u \|_{L^2}\right)\|q\|_{\dot{H}^2}.
\end{align*}
By using H\"older's inequality and Gagliardo-Nirenberg interpolation inequalities, we have that
$$
\|\pax u\|^6_{L^6}\leq c\|\pax u\|^6_{\dot{H}^{1/3}}\leq c\|\pax u\|^5_{L^2}\|\pax^3 u\|_{L^2},
$$
\begin{align*}
\|\pax u \pax^2u \|^2_{L^2}&\leq \|\pax u\|^2_{L^6}\|\pax^2u \|_{L^3}^2\\
&\leq c\|\pax u\|^{5/3}_{L^2}\|\pax^3 u\|_{L^2}^{2/3}\|\pax^2 u\|^{5/3}_{L^2}\\
&\leq c\|\pax u\|^{2}_{L^2}\|\pax^3 u\|_{L^2}\|\pax^2 u\|_{L^2},
\end{align*}
and
\begin{equation}\label{qh2}
\frac{d}{dt}\|q\|_{\dot{H}^2}^2\leq \|\pax^3u\|^2_{L^2}+c\|u\|_{\dot{H}^1}^{10}+c\left(\|q\|^2_{\dot{H}^2}+\|\pax^2 u\|_{L^2}\right).
\end{equation}
At this point, to obtain the appropriate bound for $\|u\|_{H^2}^2$ is an easy computation:
\begin{multline}\label{uh2}
\frac{d}{dt}\|u\|^2_{\dot{H}^2}+2\|u\|^2_{\dot{H}^3}\leq c\|u\|_{\dot{H}^3}\left(\|u\|_{\dot{H}^2}\|q\|_{L^\infty}\right.\\
\left.+\|q\|_{\dot{H}^2}\|u\|_{L^\infty}+\|u\|_{\dot{W}^{1,4}}\|q\|_{\dot{W}^{1,4}}\right).
\end{multline}
Adding together \eqref{qh2} and \eqref{uh2} and using \eqref{GN1} and Gronwall's inequality,  we obtain the bound
$$
\max_{0\leq t\leq T}\|u(t)\|_{\dot{H}^2}^2+\|q(t)\|_{\dot{H}^2}^2+\int_0^T\|u(s)\|_{\dot{H}^3}^2ds\leq C(T,\mathcal{M}_0),
$$
The $L^\infty_tH_x^3$ estimates can be obtained with the same ideas and the proof follows.

\section{Proof of Theorem \ref{globalweak}: Global existence of weak solutions}\label{sec8}
Fix $0<T<\infty$ and arbitrary parameter. We consider the approximate problems
\begin{equation}\label{eq:6c}
\left\{\begin{aligned}
\pat u^\epsilon&=-\mu\Lambda^{\alpha} u+\pax(u^\epsilon q^\epsilon)+\epsilon\pax^2u^\epsilon,\\
\pat q^\epsilon&=\pax f(u^\epsilon)+\epsilon\pax^2 q^\epsilon,
\end{aligned}\right.
\end{equation}
with initial data
$$
u^\epsilon(0)=\epsilon+\jeps u_0,\,q^\epsilon(0)=\jeps q_0.
$$

With the same ideas as in Theorem \ref{globalalpha2}, we obtain the global existence of the approximate solution $(u^\epsilon,q^\epsilon)$.

\textbf{Step 1: $f(y)=y^2/2$;} Using Theorem \ref{dissipation} we have the following global $\epsilon$-uniform bounds
$$
u^\epsilon\in L^\infty_tL^2_x\cap L^2_tH^{\alpha/2}_x,
$$
$$
\pat u^\epsilon\in L^2_tH^{-2}_x,
$$
$$
q^\epsilon\in L^\infty_tL^2_x.
$$
Recalling that $0<T<\infty$ and using the embedding $L^\infty\hookrightarrow L^2$, we can apply the classical Aubin-Lions Lemma with
$$
X_0=H^{\alpha/2},\;X=L^2,\;X_1=H^{-2},
$$
so
$$
X_0\subset\subset X \hookrightarrow X_1,
$$
and conclude that
$$
Y=\{v\text{ s.t. }v\in L^2(0,T;X_0)\cap \pat v\in L^2(0,T;X_1)\}
$$
is compactly embedded into $L^2(0,T;L^2)$. Thus, we have the following convergences
\begin{equation}\label{convergence}
u^\epsilon \rightarrow u\text{ in } L^2_tL^2_x,\,u^\epsilon \rightharpoonup u\text{ in } L^2_tH^{\alpha/2}_x,\,q^\epsilon\rightharpoonup q\in L^2_tL^2_x.
\end{equation}

\textbf{Step 2: $f(y)=y^r/r$, $1<r<2$;} As before, using Theorem \ref{dissipation} we have the following global $\epsilon$-uniform bounds
$$
u^\epsilon\in L^\infty_tL^r_x\cap L^{2r}_tW^{\alpha/2r-,r}_x,
$$
$$
\pat u^\epsilon\in L^2_tH^{-2}_x,
$$
$$
q^\epsilon\in L^\infty_tL^2_x.
$$
Using Rellich Theorem we have
$$
W^{\alpha/2r-,r}\subset\subset L^2
$$
provided that
$$
\frac{1}{2}>\frac{1}{r}-\frac{\alpha}{2r},
$$
or, equivalently,
$$
\alpha>2-r.
$$
Using Aubin-Lions with
$$
X_0=W^{\alpha/2r-\delta,r} (\text{ for }0<\delta\ll1 \text{ small enough}),\,X=L^2,X_1=H^{-2}
$$
we obtain the convergences 
\begin{equation}\label{convergence2}
u^\epsilon \rightarrow u\text{ in } L^{2r}_tL^2_x,\,q^\epsilon\rightharpoonup q\in L^2_tL^2_x.
\end{equation}

\textbf{Step 3: $f(y)=y$;} Recalling Theorem \ref{dissipation} we have the following global $\epsilon$-uniform bounds
$$
u^\epsilon\in L^\infty_tL^1_x\cap L^{2}_tW^{\alpha/2-,1}_x,
$$
$$
\pat u^\epsilon\in L^2_tH^{-2}_x,
$$
$$
q^\epsilon\in L^\infty_tL^2_x.
$$
Using Rellich Theorem we have
$$
W^{\alpha/2-,1}\subset\subset L^2
$$
provided that
$$
\alpha>1.
$$
Using Aubin-Lions with
$$
X_0=W^{\alpha/2-\delta,1} (\text{ for }0<\delta\ll1 \text{ small enough}),\,X=L^2,X_1=H^{-2}
$$
we obtain the convergences 
\begin{equation}\label{convergence3}
u^\epsilon \rightarrow u\text{ in } L^{2}_tL^2_x,\,q^\epsilon\rightharpoonup q\in L^2_tL^2_x.
\end{equation}

\textbf{Step 4: Passing to the limit;} Equipped with \eqref{convergence},\eqref{convergence2}, \eqref{convergence3} and the properties of the mollifiers, we can pass to the limit in the linear terms. Thus, we only have to pass to the limit in the nonlinear terms:
$$
I^\epsilon_1=\int_0^T\int_\TT u^\epsilon q^\epsilon \pax \phi dxds,
$$
$$
I^\epsilon_2=\int_0^T\int_\TT f(u^\epsilon) \pax \phi dxds.
$$
We compute
$$
I^\epsilon_1-\int_0^T\int_\TT u q^\epsilon \pax \phi dxds\leq C_\phi\int_0^T\|u^\epsilon-u\|_{L^2}\|q^\epsilon\|_{L^2},
$$
so, using the weak convergence $q^\epsilon\rightharpoonup q$ in $L^2_tL^2_x$, we have
$$
I^\epsilon_1\rightarrow \int_0^T\int_\TT uq\pax\phi dxds.
$$
The case where $f(u)=u$ can be handled easily due to its linearity. Thus, let's focus on the case where the kinetic function is given by $f(u)=u^r/r$, $1<r<2$. We compute
\begin{align*}
I^\epsilon_2- \int_0^T\int_\TT f(u) \pax \psi dxds&=\int_0^T\int_\TT \int_0^1(\lambda u^\epsilon+(1-\lambda)u)^{r-1}(u^\epsilon-u) \pax \psi d\lambda dxds\\
&\leq C_\psi\int_0^T\|(u^\epsilon+u)^{r-1}\|_{L^{r/(r-1)}}\|u^\epsilon-u\|_{L^{r}}ds\\
&\leq C_\psi\int_0^T(\|u^\epsilon\|_{L^{r}}^{r-1}+\|u\|_{L^{r}}^{r-1})\|u^\epsilon-u\|_{L^{r}}ds\\
&\leq C_\psi\sqrt{T}\sqrt{\int_0^T\|u^\epsilon-u\|_{L^{2}}^2ds},
\end{align*}
where we have used the $\epsilon-$uniform boundedness of $u^\epsilon$ in $L^\infty_tL^r_x$. In the final case $r=2$, we have that
\begin{align*}
I^\epsilon_2- \int_0^T\int_\TT u^2/2 \pax \psi dxds&=\int_0^T\int_\TT (u^\epsilon+u)(u^\epsilon-u) \pax \psi  dxds\\
&\leq C_\psi\int_0^T\|u^\epsilon+u\|_{L^{2}}\|u^\epsilon-u\|_{L^{2}}ds\\
&\leq C_\psi \sqrt{T} \sqrt{\int_0^T\|u^\epsilon-u\|_{L^{2}}^2ds}.
\end{align*}
Thus,
$$
I^\epsilon_2\rightarrow \int_0^T\int_\TT f(u) \pax \psi dxds.
$$

\appendix
\section{Fractional Laplacian}\label{sec9b}
Recalling our convention for the Fourier transform:
\begin{align*}
\hat{g}(\xi)&=\frac{1}{(2\pi)^{d/2}}\int_{\RR^d} g(x)e^{-ix\cdot\xi}dx,
\end{align*}
we write $\Lambda^\alpha =(-\Delta)^{\frac{\alpha}{2}}$, \emph{i.e.}
\begin{equation}\label{eq:7}
\widehat{\Lambda^{\alpha}u}(\xi)=|\xi|^\alpha \hat{u}(\xi).
\end{equation}

In this section we are going to obtain the formulation of the fractional Laplacian as the following singular integral
\begin{equation}\label{eq:1b.1.5}
\Lambda^{\alpha}u=2C \int_{\RR^d}\frac{u(x)-u(y)}{|x-y|^{d+\alpha}}dy,
\end{equation}
where
$$
C=\left(\int_{\RR^d}\frac{4\sin^2\left(\frac{x_1}{2}\right)}{|x|^{d+\alpha}} dx\right)^{-1}.
$$

In the case of periodic functions, we have the following equivalent representation
\begin{align}\label{eq:9}
\Lambda^\alpha u(x)&=2C\bigg{(}\sum_{k\in \ZZ^d, k\neq 0}\int_{\TT^d}\frac{u(x)-u(x-\eta)d\eta}{|\eta+2k\pi|^{d+\alpha}}\nonumber\\
&\quad +\text{P.V.}\int_{\TT^d}\frac{u(x)-u(x-\eta)d\eta}{|\eta|^{d+\alpha}}\bigg{)}.
\end{align}
This result is well-known, however, the method that we are going to use has the advantage of being \emph{luddite} in the sense of not requiring any advanced analysis tools, just basic calculus. The main idea is to use the equivalence of norms between $H^s$ and $W^{s,2}$:
\begin{prop}Fix $0<s<1$ and let $u$ be a function in the Schwartz class. Then the following equality holds
\begin{equation}\label{eq:1b.1.2}
\|u\|_{\dot{H}^s(\RR^d)}^2=C\|u\|_{\dot{W}^{s,2}}^2
\end{equation}
for an explicit constant $C=C(s,d)$. 
\end{prop}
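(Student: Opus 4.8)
The plan is to pass to the Fourier side in both quantities and reduce the claim to the evaluation of a single $\xi$-independent constant. First I would rewrite the Gagliardo seminorm through the change of variables $z=x-y$ (for fixed $x$), which gives
\begin{equation*}
\|u\|_{\dot{W}^{s,2}}^2 = \int_{\RR^d}\int_{\RR^d}\frac{|u(x)-u(x-z)|^2}{|z|^{d+2s}}\,dz\,dx.
\end{equation*}
Since the integrand is non-negative, Tonelli's theorem lets me exchange the order of integration and first carry out the integral in $x$. For fixed $z$ this inner integral is exactly $\|u-\tau_z u\|_{L^2}^2$, where $\tau_z u(x)=u(x-z)$ denotes translation.

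The key step is to evaluate this inner integral via Plancherel's theorem. Since the translate $\tau_z u$ has Fourier transform $e^{-iz\cdot\xi}\hat{u}(\xi)$, I obtain $\int_{\RR^d}|u(x)-u(x-z)|^2\,dx=\int_{\RR^d}|1-e^{-iz\cdot\xi}|^2|\hat{u}(\xi)|^2\,d\xi$. Substituting this back and exchanging the order of integration once more (again by Tonelli), the seminorm becomes
\begin{equation*}
\|u\|_{\dot{W}^{s,2}}^2 = \int_{\RR^d}|\hat{u}(\xi)|^2\left(\int_{\RR^d}\frac{|1-e^{-iz\cdot\xi}|^2}{|z|^{d+2s}}\,dz\right)d\xi.
\end{equation*}

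It then remains to show that the inner $z$-integral equals $K|\xi|^{2s}$ for an explicit constant $K=K(s,d)$ independent of $\xi$. Writing $|1-e^{-iz\cdot\xi}|^2=4\sin^2(z\cdot\xi/2)$ and using the rotational invariance of both $|z|$ and Lebesgue measure, I reduce to the case $\xi=|\xi|e_1$; the rescaling $w=|\xi|z$ then factors out $|\xi|^{2s}$ and leaves the constant $K=\int_{\RR^d}4\sin^2(w_1/2)|w|^{-d-2s}\,dw$, which is precisely $C^{-1}$ with $C$ as defined before the statement (taking $\alpha=2s$). Comparing with $\|u\|_{\dot{H}^s}^2=\int_{\RR^d}|\xi|^{2s}|\hat{u}(\xi)|^2\,d\xi$, which follows from Plancherel and \eqref{eq:7}, yields $\|u\|_{\dot{W}^{s,2}}^2=K\|u\|_{\dot{H}^s}^2$, i.e. the claim with $C=1/K$. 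The main point to check is that $K$ is finite, and this is exactly where the hypothesis $0<s<1$ enters: near $z=0$ one has $\sin^2(z\cdot\xi/2)=O(|z|^2)$, so the integrand behaves like $|z|^{2-d-2s}$ and is integrable only when $s<1$, while at infinity $\sin^2$ is bounded and integrability requires $s>0$. The Schwartz hypothesis plays no role beyond guaranteeing that every integral above is finite so that the two Fourier manipulations are rigorously justified; the genuine content of the proof is the scaling computation of $K$ together with its convergence in the range $0<s<1$.
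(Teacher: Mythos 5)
Your proposal is correct and follows essentially the same route as the paper: change of variables in the Gagliardo seminorm, Plancherel applied to the translation difference, and a scaling plus rotational-invariance argument showing the inner integral equals $K|\xi|^{2s}$ with $K=C^{-1}$. The only (welcome) additions are your explicit verification that $K$ is finite precisely for $0<s<1$ and your use of rotational invariance in arbitrary dimension, where the paper checks $d=1,2,3$ by hand in polar coordinates.
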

\begin{proof}
We compute
\begin{align*}
\|u\|_{\dot{W}^{s,2}}^2&=\int_{\RR^d}\int_{\RR^d}\frac{|u(x)-u(y)|^2}{|x-y|^{d+2s}}dydx\\
&=\int_{\RR^d}\int_{\RR^d}\frac{|u(x+y)-u(y)|^2}{|x|^{d+2s}}dydx\\
&=\int_{\RR^d}\int_{\RR^d}\frac{|e^{i\xi\cdot x}-1|^2|\hat{u}(\xi)|^2}{|x|^{d+2s}}d\xi dx\\
&=\int_{\RR^d}\left(\frac{1}{|\xi|^{2s}}\int_{\RR^d}\frac{4\sin^2\left(\frac{\xi\cdot x}{2}\right)}{|x|^{d+2s}}dx\right) |\xi|^{2s}|\hat{u}(\xi)|^2d \xi,
\end{align*}
due to properties of the Fourier transform.

Due to Plancherel Theorem, the equality \eqref{eq:1b.1.2} now reduces to whether
$$
I(\xi)=\frac{1}{|\xi|^{2s}}\int_{\RR^d}\frac{4\sin^2\left(\frac{\xi\cdot x}{2}\right)}{|x|^{d+2s}} dx
$$
is constant (and then $I=c^{-1}$) or not. Notice that, by changing variables,
$$
I(\lambda \xi)=\frac{1}{|\xi|^{2s}}\int_{\RR^d}\frac{4\sin^2\left(\frac{\xi\cdot \lambda x}{2}\right)}{|\lambda x|^{d+2s}} dx=I(\xi).
$$
Thus, it is enough to consider $\xi$ such that $|\xi|=1$ and
$$
I(\xi)=\int_{\RR^d}\frac{4\sin^2\left(\frac{\xi\cdot x}{2}\right)}{|x|^{d+2s}} dx.
$$
Then, when $d=1$, it is clear that $I(\xi)=I(1)$. When $d=2$, using polar coordinates, we have that $\xi=(\cos(\omega),\sin(\omega))$ and
$$
I(\omega)=\int_{0}^\infty\int_{-\pi}^\pi\frac{4\sin^2\left(\frac{r
\cos(\omega-\theta)}{2}\right)}{r^{1+2s}} d\theta dr.
$$
Thus, changing variables, we have that
$$
I(\omega)=I(0),\text{ i.e. } I(\xi)=I(e_1).
$$
The case where $d=3$ follows with the same ideas. As a consequence, we have proved that
$$
\|u\|_{\dot{W}^{s,2}}^2=\left(\int_{\RR^d}\frac{4\sin^2\left(\frac{x_1}{2}\right)}{|x|^{d+2s}} dx\right)\int_{\RR^d}|\xi|^{2s}|\hat{u}(\xi)|^2d\xi=\left(\int_{\RR^d}\frac{4\sin^2\left(\frac{x_1}{2}\right)}{|x|^{d+2s}} dx\right)\|u\|_{\dot{H}^{s}}^2.
$$
Equivalently, we have proved 
$$
C\|u\|_{\dot{W}^{s,2}}^2=\|u\|_{\dot{H}^{s}}^2
$$
where
$$
C=\left(\int_{\RR^d}\frac{4\sin^2\left(\frac{x_1}{2}\right)}{|x|^{d+2s}} dx\right)^{-1}.
$$
\end{proof}
The previous computation serves as a bridge between the multiplier definition of $\Lambda^{\alpha}$ on the Fourier space and certain integral expression involving a singular kernel on the physical space. Then we have the following result
\begin{prop}Fix $0<s<1$ and let $u$ be a function in the Schwartz class. Then the following equalities holds
\begin{align}\label{eq:1b.1.3}
\|u\|_{\dot{H}^s(\RR^d)}^2&=\int_{\RR^d}\Lambda^{2s}u u dx,\\
\label{eq:1b.1.4}
\|u\|_{\dot{W}^{s,2}}^2&=2\int_{\RR^d}\int_{\RR^d}\frac{u(x)-u(y)}{|x-y|^{d+2s}}dy\, u(x)dx.
\end{align}
Thus
$$
\int_{\RR^d}\Lambda^{\alpha}u u dx=2C\int_{\RR^d}\int_{\RR^d}\frac{u(x)-u(y)}{|x-y|^{d+\alpha}}dy\, u(x)dx,
$$
where
$$
C=\left(\int_{\RR^d}\frac{4\sin^2\left(\frac{x_1}{2}\right)}{|x|^{d+\alpha}} dx\right)^{-1}.
$$
Furthermore, the operators 
$$
T u=\int_{\RR^d}\frac{u(x)-u(y)}{|x-y|^{d+\alpha}}dy, \Lambda^{\alpha}u
$$
are self-adjoint.
\end{prop}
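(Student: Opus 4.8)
The plan is to prove the two quadratic-form identities \eqref{eq:1b.1.3} and \eqref{eq:1b.1.4} in isolation, then glue them using the norm equivalence \eqref{eq:1b.1.2} of the preceding Proposition, and finally read off the self-adjointness from the symmetry of the Fourier multiplier and of the kernel. For \eqref{eq:1b.1.3} I would simply invoke Plancherel: by definition $\|u\|_{\dot H^s}^2=\|\Lambda^s u\|_{L^2}^2$, and \eqref{eq:7} together with Parseval gives
\begin{equation*}
\|\Lambda^s u\|_{L^2}^2=\int_{\RR^d}|\xi|^{2s}|\hat u(\xi)|^2\,d\xi=\int_{\RR^d}\widehat{\Lambda^{2s}u}(\xi)\overline{\hat u(\xi)}\,d\xi=\int_{\RR^d}\Lambda^{2s}u\,u\,dx,
\end{equation*}
the last step using that the multiplier $|\xi|^{2s}$ is real. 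For \eqref{eq:1b.1.4} I start from the identity $\|u\|_{\dot W^{s,2}}^2=\int_{\RR^d}\int_{\RR^d}\tfrac{|u(x)-u(y)|^2}{|x-y|^{d+2s}}\,dy\,dx$ already recorded in the proof of the previous Proposition, and symmetrize: since the kernel $|x-y|^{-(d+2s)}$ is invariant under $x\leftrightarrow y$, renaming variables in $\int\int\tfrac{u(x)-u(y)}{|x-y|^{d+2s}}u(x)\,dy\,dx$ and averaging the two copies produces $\int\int\tfrac{(u(x)-u(y))^2}{|x-y|^{d+2s}}$, which is exactly $\|u\|_{\dot W^{s,2}}^2$; this accounts for the factor $2$.

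To obtain the displayed consequence, I would feed \eqref{eq:1b.1.3} and \eqref{eq:1b.1.4} into \eqref{eq:1b.1.2}: the left-hand side becomes $\int\Lambda^{2s}u\,u\,dx$, the right-hand side becomes $2C\int\int\tfrac{u(x)-u(y)}{|x-y|^{d+2s}}u(x)\,dy\,dx$ with the same explicit $C$, and setting $\alpha=2s$ yields precisely the stated equality with $C=\big(\int_{\RR^d}\tfrac{4\sin^2(x_1/2)}{|x|^{d+\alpha}}\,dx\big)^{-1}$.

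For the self-adjointness claims I treat the two operators separately. For $\Lambda^\alpha$, Plancherel gives $\langle\Lambda^\alpha u,v\rangle=\int_{\RR^d}|\xi|^\alpha\hat u(\xi)\overline{\hat v(\xi)}\,d\xi$, which is symmetric in $u$ and $v$ because the multiplier is real (and even), hence $\langle\Lambda^\alpha u,v\rangle=\langle u,\Lambda^\alpha v\rangle$. For $T$, I repeat the $x\leftrightarrow y$ swap on the bilinear form, rewriting $\langle Tu,v\rangle$ as $\tfrac12\int\int\tfrac{(u(x)-u(y))(v(x)-v(y))}{|x-y|^{d+\alpha}}\,dy\,dx$, which is manifestly symmetric in $u$ and $v$; self-adjointness follows.

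The one point requiring care, which I expect to be the main obstacle, is the principal-value nature of the inner integral. When $s\geq 1/2$ the integral $\text{P.V.}\int\tfrac{u(x)-u(y)}{|x-y|^{d+2s}}\,dy$ converges only conditionally, so the symmetrization cannot be applied to an absolutely convergent object as written. I would therefore carry out every swap and recombination on the truncated kernel $\mathbf 1_{|x-y|>\epsilon}\,|x-y|^{-(d+2s)}$, for which all integrals converge absolutely, and only then pass to the limit $\epsilon\to0$. The passage is justified because the symmetric form $\int\int\tfrac{|u(x)-u(y)|^2}{|x-y|^{d+2s}}$ is absolutely convergent for Schwartz $u$ (using $|u(x)-u(y)|\lesssim\min(|x-y|,1)$ near and away from the diagonal), which both controls the limit and shows the antisymmetric part of the truncated integrand integrates to zero. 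The identical truncation argument underlies the symmetrization in the self-adjointness of $T$.
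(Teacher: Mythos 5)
Your proposal is correct and follows essentially the same route as the paper's own proof: Plancherel for \eqref{eq:1b.1.3} and for the self-adjointness of $\Lambda^{\alpha}$, the $x\leftrightarrow y$ change of variables and symmetrization for \eqref{eq:1b.1.4} and for the bilinear form of $T$, and the preceding norm equivalence \eqref{eq:1b.1.2} with $\alpha=2s$ to deduce the displayed consequence. The only addition is your truncation argument with the kernel $\mathbf{1}_{|x-y|>\epsilon}|x-y|^{-(d+2s)}$ justifying the principal-value manipulations when $s\geq 1/2$, a point the paper's formal change of variables passes over silently---this is extra rigor on the same method, not a different approach.
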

\begin{proof}
Equation \eqref{eq:1b.1.3} is just an easy application of Plancherel Theorem. In the same way, we can obtain the self-adjointness of the fractional Laplacian. To prove equation \eqref{eq:1b.1.4}, we compute as follows
\begin{align*}
J&=\int_{\RR^d}\int_{\RR^d}\frac{u(x)-u(y)}{|x-y|^{d+2s}} u(x) dydx\\
&=-\int_{\RR^d}\int_{\RR^d}\frac{u(x)-u(y)}{|x-y|^{d+2s}} u(y) dydx\\
&=\frac{1}{2}\int_{\RR^d}\int_{\RR^d}\frac{|u(x)-u(y)|^2}{|x-y|^{d+2s}}dydx\\
&=\frac{1}{2}\|u\|_{\dot{W}^{s,2}}^2.
\end{align*}
To see that $Tu$ is self-adjoint, we perform a change of variables,
\begin{align*}
\int_{\RR^d}Tu\,v\,dx&=\int_{\RR^d}\int_{\RR^d}\frac{u(x)-u(y)}{|x-y|^{d+2s}} v(x) dydx\\
&=-\int_{\RR^d}\int_{\RR^d}\frac{u(x)-u(y)}{|x-y|^{d+2s}} v(y) dydx\\
&=\frac{1}{2}\int_{\RR^d}\int_{\RR^d}\frac{(u(x)-u(y))(v(x)-v(y))}{|x-y|^{d+2s}}dydx\\
&=\int_{\RR^d}Tv\,u\,dx,
\end{align*}
and the result follows.
\end{proof}
Once the previous Proposition has been established, we fix $v$ a function in the Schwartz class and consider
\begin{align*}
K=\int_{\RR^d}\Lambda^{\alpha}(u+v)(u+v) dx-2C \int_{\RR^d}T(u+v) (u+v) dx.
\end{align*}
Due to the previous Proposition, we have that
$$
K=0.
$$
Then, we compute
\begin{align*}
K&=\int_{\RR^d}(\Lambda^{\alpha}u+\Lambda^{\alpha}v)(u+v) dx-2C \int_{\RR^d}(Tu+Tv) (u+v) dx\\
&=\int_{\RR^d}\Lambda^{\alpha}uv+\Lambda^{\alpha}vu dx-2C \int_{\RR^d}Tu v+Tvu dx\\
&=2\int_{\RR^d}\Lambda^{\alpha}uv-4C \int_{\RR^d}Tu vdx.
\end{align*}
In particular, we have proved the equality \eqref{eq:1b.1.5}. To obtain \eqref{eq:9}, we decompose $\RR^d$ and use a change of variables.

\section{Fractional Sobolev inequalities}\label{sec9}
We need an interpolation inequality:
\begin{lem}[\cite{Gsemiconductor}]
Fix $1<\alpha\leq 2$. Then, the following inequalities hold
\begin{equation}\label{eq:10}
\|u\|_{L^\infty(\RR)}\leq C(\alpha)\|u\|_{\dot{H}^{\alpha/2}(\RR)}^{\frac{2}{1+\alpha}}\|u\|_{L^1(\RR)}^{1-\frac{2}{1+\alpha}},
\end{equation}
\begin{equation}\label{eq:10b}
\|u-\langle u\rangle\|_{L^\infty(\TT)}\leq C(\alpha)\|u\|_{\dot{H}^{\alpha/2}(\TT)}^{\frac{2}{1+\alpha}}\|u\|_{L^1(\TT)}^{1-\frac{2}{1+\alpha}}.
\end{equation}
\end{lem}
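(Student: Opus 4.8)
The plan is to prove \eqref{eq:10} by the classical Fourier-analytic splitting of the inverse Fourier integral into low and high frequencies, followed by optimisation of the cut-off. Using the inversion formula and estimating pointwise one has $|u(x)| \le C_d \int_{\RR^d} |\hat u(\xi)|\,d\xi$, so I split the integral at $|\xi| = R$ for a parameter $R>0$ to be chosen. Specialising to $d=1$, the low-frequency part is controlled by the elementary bound $\|\hat u\|_{L^\infty} \le C\|u\|_{L^1}$, giving
\[
\int_{|\xi|\le R} |\hat u(\xi)|\,d\xi \le C\,R\,\|u\|_{L^1}.
\]

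For the high-frequency part I would insert the weight $|\xi|^{\alpha/2}$ and apply Cauchy--Schwarz,
\[
\int_{|\xi|> R} |\hat u(\xi)|\,d\xi \le \Big(\int_{|\xi|>R}|\xi|^{-\alpha}\,d\xi\Big)^{1/2}\Big(\int_{|\xi|>R}|\xi|^{\alpha}|\hat u(\xi)|^2\,d\xi\Big)^{1/2}\le C\,R^{\frac{1-\alpha}{2}}\,\|u\|_{\dot{H}^{\alpha/2}},
\]
where the first factor is finite \emph{precisely} because $\alpha>1$. This is the structural role of the hypothesis $1<\alpha\le2$: for $\alpha\le1$ the tail integral $\int_{|\xi|>R}|\xi|^{-\alpha}\,d\xi$ diverges in one dimension and no such $L^\infty$ control is available.

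Collecting the two pieces yields, for every $R>0$, the inequality $\|u\|_{L^\infty(\RR)} \le C\big(R\,\|u\|_{L^1} + R^{\frac{1-\alpha}{2}}\,\|u\|_{\dot{H}^{\alpha/2}}\big)$. It then remains to optimise in $R$. Balancing the two terms gives $R_\ast = \big(\|u\|_{\dot{H}^{\alpha/2}}/\|u\|_{L^1}\big)^{2/(1+\alpha)}$ up to a constant, and substituting back produces
\[
\|u\|_{L^\infty(\RR)} \le C(\alpha)\,\|u\|_{\dot{H}^{\alpha/2}(\RR)}^{\frac{2}{1+\alpha}}\,\|u\|_{L^1(\RR)}^{1-\frac{2}{1+\alpha}},
\]
the exponents being forced by the dilation scaling $u\mapsto u(\lambda\,\cdot)$, which I would use as an independent consistency check.

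For the periodic bound \eqref{eq:10b} I would run the same argument with Fourier series, noting that subtracting the mean removes the zero mode, so $u-\langle u\rangle = \sum_{k\neq0}\hat u_k e^{ikx}$ and $|u(x)-\langle u\rangle| \le \sum_{k\neq0}|\hat u_k|$. The modes $0<|k|\le R$ contribute $\le CR\|u\|_{L^1}$ and the modes $|k|>R$ contribute $\le CR^{\frac{1-\alpha}{2}}\|u\|_{\dot{H}^{\alpha/2}}$ by the discrete Cauchy--Schwarz, with $\sum_{|k|>R}|k|^{-\alpha}$ convergent again thanks to $\alpha>1$. The one genuine difference, and the step I expect to require the most care, is that the torus carries no dilation symmetry, so the cut-off must obey $R\ge1$ and the unconstrained optimum $R_\ast$ may fall below $1$. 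I would handle this by recording the complementary estimate $\|u-\langle u\rangle\|_{L^\infty(\TT)}\le C\|u\|_{\dot{H}^{\alpha/2}}$, obtained by applying Cauchy--Schwarz to all nonzero modes at once (again valid since $\alpha>1$); an elementary case distinction according to whether the ratio $\|u\|_{\dot{H}^{\alpha/2}}/\|u\|_{L^1}$ is large or small then interpolates between this bound and the optimised one to recover \eqref{eq:10b} with the same exponents.
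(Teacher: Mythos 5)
Your proposal is correct, and in fact the paper contains no proof to compare against: the lemma is imported verbatim from \cite{Gsemiconductor}, so your argument serves as a complete, self-contained proof rather than an alternative to one in the text. All steps check out: the tail integral $\int_{|\xi|>R}|\xi|^{-\alpha}d\xi$ converges precisely because $\alpha>1$, the optimal cut-off $R_\ast=\left(\|u\|_{\dot{H}^{\alpha/2}}/\|u\|_{L^1}\right)^{2/(1+\alpha)}$ reproduces the stated exponents (consistent with the dilation scaling, as you verified), and your case distinction on $\TT$ is the right fix for the missing scaling symmetry — when $R_\ast<1$ one has $\|u\|_{\dot{H}^{\alpha/2}}\leq\|u\|_{L^1}$, and since the exponent $1-\frac{2}{1+\alpha}=\frac{\alpha-1}{1+\alpha}$ is positive, the crude bound $\|u-\langle u\rangle\|_{L^\infty(\TT)}\leq C\|u\|_{\dot{H}^{\alpha/2}(\TT)}$ can indeed be upgraded to the multiplicative form \eqref{eq:10b}.
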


Finally, we collect another inequality

\begin{lem}[\cite{BG4}]\label{lemaentropy3}
Let $0\leq u\in L^{1+s}(\TT)$, $s\le 1$, be a given function and $0<\alpha<2$, $0<\delta<\alpha/(2+2s)$ two fixed constants. Then, 
$$
0 \le \int_{\TT}\Lambda^\alpha u(x) u^s(x)dx.
$$
Moreover,
$$
\|u\|_{\dot{W}^{\alpha/(2+2s)-\delta,1+s}}^{2+2s}\leq C(\alpha,s,\delta)\|u\|_{L^{1+s}}^{1+s}\int_{\TT}\Lambda^\alpha u(x) u^s(x)dx.
$$
\end{lem}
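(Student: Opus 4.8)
The plan is to follow the strategy behind Lemma~\ref{lemaentropy}, but to replace the structural hypothesis \eqref{auxphi} — which the choice $\Gamma(z)=z^s$ violates near $z=0$ whenever $s>0$ — by a sharp pointwise inequality that absorbs the degeneracy. Throughout write $p=1+s$ and $\beta=\tfrac{\alpha}{2+2s}-\delta\in(0,1)$, and recall that on $\TT$ the relevant seminorm is $\|u\|_{\dot W^{\beta,p}}^{p}=\int_\TT\int_\TT\frac{|u(x)-u(y)|^{p}}{|x-y|^{1+\beta p}}\,dx\,dy$.

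First I would settle the positivity and, at the same time, extract the comparison needed later. Using the periodic singular-integral representation \eqref{eq:9}, multiplying by $u^s(x)$, integrating over $x\in\TT$ and symmetrizing each kernel, one obtains
$$
I:=\int_\TT\Lambda^\alpha u\,u^s\,dx=C\int_\TT\int_\TT\frac{(u(x)-u(y))(u^s(x)-u^s(y))}{|x-y|^{1+\alpha}}\,dx\,dy+R,
$$
where $R$ collects the contributions of the smooth kernels $|x-y+2k\pi|^{-1-\alpha}$, $k\neq0$. Since $u\geq0$ and $t\mapsto t^s$ is increasing, in every integrand the factors $(u(x)-u(y))$ and $(u^s(x)-u^s(y))$ carry the same sign; hence $R\geq0$ and the principal-value double integral is nonnegative, giving $I\geq0$. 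Discarding $R\geq0$ yields the comparison $\int_\TT\int_\TT\frac{(u(x)-u(y))(u^s(x)-u^s(y))}{|x-y|^{1+\alpha}}\,dx\,dy\leq C_\alpha^{-1}I$.

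Next I would bound the seminorm by a Cauchy--Schwarz split tuned so that one factor reproduces $I$. Writing the integrand of $\|u\|_{\dot W^{\beta,p}}^{p}$ as $A(x,y)\,B(x,y)$ with
$$
A=\frac{|u(x)-u(y)|^{1/2}\,|u^s(x)-u^s(y)|^{1/2}}{|x-y|^{(1+\alpha)/2}},\qquad B=\frac{|u(x)-u(y)|^{s+1/2}}{|u^s(x)-u^s(y)|^{1/2}\,|x-y|^{\kappa}},\quad \kappa=\tfrac12-\delta(1+s),
$$
Cauchy--Schwarz gives $\|u\|_{\dot W^{\beta,p}}^{p}\leq(\int\!\!\int A^2)^{1/2}(\int\!\!\int B^2)^{1/2}$, and because $A^2=\frac{(u(x)-u(y))(u^s(x)-u^s(y))}{|x-y|^{1+\alpha}}$ the first factor is exactly $\leq(C_\alpha^{-1}I)^{1/2}$ by the previous step. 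The second factor, $B^2=\frac{|u(x)-u(y)|^{2s+1}}{|u^s(x)-u^s(y)|\,|x-y|^{2\kappa}}$, is where the degeneracy must be confronted, and I would control it by the elementary inequality
$$
\frac{|a-b|^{2s+1}}{|a^s-b^s|}\leq\frac1s\,\max(a,b)^{1+s}\leq\frac1s\big(a^{1+s}+b^{1+s}\big),\qquad a,b\geq0,\ 0<s\leq1,
$$
which follows, for $a>b$, from $a^s-b^s=\int_b^a s\,t^{s-1}\,dt\geq s\,a^{s-1}(a-b)$ together with $(a-b)^{2s}\leq a^{2s}$. This turns the second factor into a Riesz-type potential: since $2\kappa=1-2\delta(1+s)<1$ the kernel $|x-y|^{-2\kappa}$ is integrable on $\TT$, so $\int\!\!\int B^2\leq C(\alpha,s,\delta)\,\|u\|_{L^{1+s}}^{1+s}$. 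Combining the two factors and squaring gives $\|u\|_{\dot W^{\alpha/(2+2s)-\delta,1+s}}^{2+2s}\leq C(\alpha,s,\delta)\,\|u\|_{L^{1+s}}^{1+s}\,I$, as claimed.

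The main obstacle — and the only genuinely new ingredient relative to Lemma~\ref{lemaentropy} — is precisely the degeneracy of $\Gamma'(z)=s\,z^{s-1}$ as $z\to0$, which destroys the clean lower bound $\Gamma'\geq c/z$ exploited there; the pointwise inequality above is what replaces it. The role of the parameter $\delta>0$ is exactly to keep the resulting Riesz exponent $2\kappa=1-2\delta(1+s)$ strictly below the critical value $1$: at $\delta=0$ the potential $\int_\TT|x-y|^{-1}\,dy$ diverges logarithmically, which is why the statement is restricted to $\delta<\alpha/(2+2s)$. The positivity assertion, by contrast, is immediate from the monotonicity of $t\mapsto t^s$ and the symmetrized representation.
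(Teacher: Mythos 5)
Your proof is correct. The paper itself gives no proof of Lemma \ref{lemaentropy3} (it is imported from \cite{BG4}), but your argument is essentially the same one used there and in this paper's own proof of \eqref{aux2} in Lemma \ref{lemaentropy}: symmetrize the periodic kernel to get positivity and the comparison with the Fisher term, then a Cauchy--Schwarz split in which one factor reconstructs the symmetrized dissipation and the other a subcritical Riesz potential bounded by $\|u\|_{L^{1+s}}^{1+s}$ --- with your elementary inequality $\frac{|a-b|^{2s+1}}{|a^s-b^s|}\leq \frac{1}{s}\left(a^{1+s}+b^{1+s}\right)$ (which checks out via $a^s-b^s\geq s\,a^{s-1}(a-b)$ for $a>b\geq0$, $0<s\leq1$) correctly replacing the hypothesis \eqref{auxphi} that $\Gamma(z)=z^s$ violates at $z=0$, and the exponent bookkeeping $\kappa=\tfrac12-\delta(1+s)$, $2\kappa=1-2\delta(1+s)<1$ exactly accounting for the role of $\delta$.
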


\subsection*{Acknowledgment}
The author is partially supported by the Labex MILYON and the Grant MTM2014-59488-P from the Ministerio de Econom\'ia y Competitividad (MINECO, Spain).

\bibliographystyle{abbrv}

\end{document}